\documentclass[12pt, reqno]{amsart}
\usepackage{amsmath, amsthm, amscd, amsfonts, amssymb, graphicx, color,verbatim}
\usepackage[bookmarksnumbered, colorlinks, plainpages]{hyperref}
\hypersetup{colorlinks=true,linkcolor=red, anchorcolor=green, citecolor=cyan, urlcolor=red,
	filecolor=magenta, pdftoolbar=true}

\newtheorem{theorem}{Theorem}[section]

\newtheorem {question}[theorem]{Question}
\newtheorem{proposition}[theorem]{Proposition}
\newtheorem{corollary}[theorem]{Corollary}
\theoremstyle{definition}
\newtheorem{definition}[theorem]{Definition}
\newtheorem{example}[theorem]{Example}

\newtheorem{remark}[theorem]{Remark}
\newtheorem{\thetheorem}[theorem]{Theorem A}
\numberwithin{equation}{section}

\begin{document}
	\setcounter{page}{1}
	
	\title[Singular inner eigenfunctions of composition operators]{Singular inner eigenfunctions of composition operators}
	\author[V. A. Anjali, P. Muthukumar \MakeLowercase{and} P. Shankar]{V. A. Anjali, P. Muthukumar \MakeLowercase{and} P. Shankar}
	

	\address{V. A. Anjali, Department of Mathematics, Cochin University of Science And Technology,
		Ernakulam, Kerala- 682022, India. }
	\email{\textcolor[rgb]{0.00,0.00,0.84}{anjaliva6446@gmail.com}}
	
	\address{P. Muthukumar, Department of Mathematics, Indian Institute of Technology,
		Kanpur- 208016, India. }
	\email{\textcolor[rgb]{0.00,0.00,0.84}{pmuthumaths@gmail.com, muthu@iitk.ac.in}}
	
	\address{P. Shankar, Department of Mathematics, Cochin University of Science And Technology,
		Ernakulam, Kerala- 682022, India.}
	\email{\textcolor[rgb]{0.00,0.00,0.84}{shankarsupy@gmail.com, shankarsupy@cusat.ac.in}}
	
	\subjclass[2020]{Primary 47B33; Secondary 47A15, 47B38, 30H10, 46E15, 46E22.}
	
	\keywords{Composition operators, invariant subspaces, inner functions, Blaschke products,
		singular inner functions, Hardy spaces, eigenfunctions}
	
	\date{\today
	}

	\begin{abstract}
		 This paper characterizes all the singular inner eigenfunctions of the composition operators $C_\phi$ 
		that arise from discrete measures, when $\phi$  
		is an automorphism of unit disk. By establishing a connection between Beurling and model invariant subspaces, we classify all the inner functions so that the  corresponding Beurling subspace is invariant under the composition operators induced by non-elliptic automorphisms. This classification involves solving the eigenfunction equation for the composition operator. Further, we present some applications of the above-mentioned connection.
		
	\end{abstract}
	\maketitle

\section{Introduction}
The Hardy–Hilbert space on the open unit disc $\mathbb{D}$, denoted by $H^2({\mathbb{D}})$ or simply $H^2$, is the Hilbert space of  all holomorphic functions $f:\mathbb{D}\rightarrow \mathbb{C}$
such that 
$$
\Vert f\Vert_{2}:=\sup\limits_{0\leq r<1}\left(\dfrac{1}{2\pi}\int\limits_{0}^{2\pi}
|f(re^{i\theta})|^2d\theta\right)^{\frac{1}{2}}
$$ 
is finite.
 For a given holomorphic self-map $\phi$ of $\mathbb{D}$, the operator $C_\phi$
defined as $C_\phi(f)=f \circ \phi\,$,  is called the composition operator with the symbol $\phi$. By Littlewood’s subordination theorem \cite[Page~11]{shapiro1}, $C_\phi$
is a bounded linear operator on $H^2$. The theory of composition operator is largely devoted to understanding how the analytic behavior of the symbol $\phi$ is reflected in the operator-theoretic features of $C_\phi$.

The functional equation
\begin{equation*}
	f(\phi(z))=\lambda f(z),~~~\qquad \qquad z \in \mathbb{D},
\end{equation*}
where the holomorphic self-map $\phi$ of $\mathbb{D}$ is given, the holomorphic map $f$ on $\mathbb{D}$ and the scalar $\lambda$ are unknown, is known as the Schröder equation. It has been studied since the late nineteenth century, beginning with the work of  Koenigs \cite{konigs} in 1884 on holomorphic self-map $\phi$ of $\mathbb{D}$ with an interior fixed point.

Schröder’s equation is widely used in complex dynamics, differential equations, and operator theory. This equation is essential for studying the spectral properties of linear operators and the dynamics of holomorphic functions in one and several complex variables.
Gallardo-Guti\'{e}rrez et al. \cite{orbits} gave a constructive characterization of the eigenfunctions of $C_\phi$ on Hardy spaces.
Fricain et al. \cite{modelsubspace} characterized all the Blaschke eigenfunctions of $ C_{\phi}$ induced by non-elliptic automorphisms $\phi$ of $\mathbb{D}$. For further reading on this direction, we refer \cite{cowenit,mobius}.

The classical Invariant Subspace Problem (ISP) asks whether every bounded linear operator on an infinite-dimensional complex separable Hilbert space has a nontrivial proper closed invariant subspace. Nordgren et al. \cite{COMP} obtained an equivalent formulation of the ISP in terms of composition operators on  $H^2$ induced by a hyperbolic automorphism. More recently, Carmo and Noor \cite{noor} provided another reformulation of the ISP via composition operators induced by hyperbolic self-maps of $\mathbb{D}$.

Beurling \cite{beurling} gave a complete description of invariant subspaces of the multiplication operator $M_z$ (the shift operator) on
$H^2$ induced by the coordinate function. He showed that every nontrivial invariant subspace of $M_z$ on $H^2$
is of the form 
$$\theta H^2$$ 
for some inner function 
$\theta$. Motivated by this result, for an inner function 
$\theta$, the subspace $\theta H^2$  is known as a \emph{Beurling subspace}.
Consequently, a closed subspace $M\subseteq H^2$ is invariant under $M_{z}^{*}$ (note that $M$ is also referred as a \emph{model space}) if and only if $M=(\theta H^2)^\perp$ for some inner function $\theta$, where $M_{z}^{*}$ denotes the adjoint of $M_z$, namely, the backward shift operator on $H^2$ defined as 
$$M_{z}^{*}(f)(z)=\dfrac{f(z)-f(0)}{z},~ z\in \mathbb{D}.$$

Mahvidi \cite{mahvidi} studied the common invariant subspaces of two composition operators, along with their lattice containment properties. The complete structure of invariant subspaces for composition operators on $H^2$ induced by parabolic non-automorphisms were established in \cite{parabolic}. The investigation of Beurling subspaces invariant under composition operators was initiated by Chalendar and Partington \cite{Chalendar}. Subsequently, Jones \cite{jones} examined invariant Beurling subspaces of the composition operator $C_\phi$ in the case when $\phi$ is an inner function. Cowen and Wahl \cite{cowen} showed that when $\phi$ has its Denjoy–Wolff point $p$ on the unit circle, the atomic inner function subspaces with a single atom at $p$ are invariant subspaces for $C_\phi$. Matache \cite{valentine} proved that every composition operator on $H^2$ admits a nontrivial invariant Beurling subspace.
 
 Building on these results, Bose et al. \cite{buerlingtype} unified the findings of \cite{cowen, jones, valentine} and provided a characterization for a Beurling subspace $\theta H^2$ to be invariant under $C_\phi$ in terms of the functions $\theta$ and $\phi$. An alternative characterization, expressed in terms of the boundedness of certain weighted composition operators, was later given by Matache \cite{smirnov}. In \cite{article1}, the authors gave a partial answer to a question raised by  Matache \cite{valentine} regarding the singular Beurling  invariant subspaces of composition operators. 
 
   Mashreghi and Sabhankh \cite{finiterank} discussed  the finite-dimensional model invariant subspaces of composition operators. Later, Muthukumar and Sarkar \cite{model} investigated model spaces that are invariant under the composition operator $C_\phi$ on $H^2$. More recently, Muthukumar et al. \cite{modelfinite} presented a complete characterization of finite-dimensional model spaces that are invariant under composition operators. Mashreghi and Sabhankh \cite{javad} characterized all the invariant model subspaces of $C_\phi$, where $\phi$ is an inner function on $\mathbb{D}$. Their characterization depends on the inner eigenfunctions of $C_\phi$.

 In this article, we mainly consider the singular inner eigenfunctions  of $C_\phi$ arising from a discrete measure, when $\phi$ is an automorphism. The rest of the paper is organized as follows. The Section 2 deals with some basic definitions.
In Section 3, we study the inner eigenfunctions of the composition operator $C_\phi$, where $\phi$ is an automorphism of $\mathbb{D}$. In particular, we provide a complete characterization of all singular inner eigenfunctions arising from discrete measures.
In Section 4, we establish a relation between Beurling and model invariant subspaces of composition operators  $C_\phi$ on $H^2$, where the inducing symbol $\phi$ is an automorphism.  This connection enables us to classify all the inner functions $\theta$ such that the Beurling subspace $\theta H^2$ is invariant under the composition operators induced by non-elliptic automorphisms. We  also present some applications of the above-mentioned connection.

\section{Preliminaries}
In this section, we present some notation and the necessary background for what follows.
Let $\mathbb{N}$ denote
the set of all natural numbers and  $\mathbb{Z}$ denote the set of all integers. We denote the open unit disk and unit circle in the complex
plane by $\mathbb{D}$ and $\mathbb{T}$,
respectively.

The algebra of all bounded holomorphic functions on $\mathbb{D}$
with supremum norm is denoted by $H^{\infty}$. A function $\theta\in H^{\infty}$ is said to be an \textit{inner function} if its radial limit $\tilde{\theta}(e^{it})=\lim\limits_{r\rightarrow 1}{\theta}(re^{it})$ satisfies  $|\tilde{\theta}(e^{it})|=1$
a.e. on $\mathbb{T}$. Any inner function $\theta$ can be factorized as
$\theta=BS,$
where $B$ is a Blaschke product, arising from zeros of $\theta$  and $S$ is a non-vanishing inner function (also called as a singular inner function) \cite[Corollary 2.6.6]{texthardy}. It is important to
note that this factorization is
unique up to unimodular constant. For a basic introduction to Blaschke products and singular inner functions, see \cite{duren,texthardy,shapiro1}.
 We recall the following, as we need them in the sequel.
For a given finite positive Borel measure $\mu$ on $\mathbb{T}$, let

\begin{equation}\label{singulardef}
	S_{\mu}(z)= \exp\left(-\int\limits_{\mathbb{T}}\dfrac{t+z}{t-z}d\mu(t)\right)~~~~
	(z\in \mathbb{D}).
\end{equation}
Any singular inner function $S$ is of the form
$S=cS_{\mu}$,
for some finite positive Borel measure $\mu$ on $\mathbb{T}$, which is singular with respect
to Lebesgue measure on $\mathbb{T}$
and $c\in\mathbb{T}$. We say that two singular inner functions $S_1$ and $S_2$ are similar (denoted by $S_1\sim S_2 $), if  $S_1=cS_2$, for some $c \in \mathbb{T}$.
\begin{definition}\label{def}\cite[Definition 7.11 \& 7.12]{measure}
	A measure $\mu $ on 
	$\mathbb{T}$ is  said to be $ singular\, continuous$ (with respect to Lebesgue measure) if $\mu(\{z\})=0$ for all $z \in \mathbb{T}$. Also, $\mu$ is said to be $discrete$ (with respect to Lebesgue measure) if there exists a countable subset $A$ of $\mathbb{T}$ such that $\mu(A^c)=0$.
\end{definition}
Corresponding to a $discrete$  measure $\mu$ on $\mathbb{T}$, there is a sequence $\{p_i:i\in \mathbb{N}$\} (possibly finite), on which the measure is supported. We denote this set by $atom(\mu)$. Thus, $\mu$ can be written as
$$\mu=\sum\limits_{i}a_{i}\delta_{p_i},$$
where $a_i=\mu(p_i)>0$, 
$\delta_{p_i}$ is the Dirac delta measure on $\mathbb{T}$ centered at $p_i$. We may also assume that $\sum_{i}a_i=\mu(\mathbb{T})<\infty$ to make the measure $\mu$ finite. Hence, any singular inner function given by (\ref{singulardef}), corresponding to a finite discrete measure $\mu$ is of the form
$$S_\mu(z)=\exp\Big({\sum\limits_ia_i\frac{z+p_i}{z-p_i}}\Big),$$
where $\{p_i\}_{i \in \mathbb{N} }\subset \mathbb{T}$ with $a_i>0$ and $\sum_{i}a_{i}<\infty$.

For a finite singular measure $\mu$ on $\mathbb{T}$, there exist a unique pair of measures $\mu_a$ and $\mu_c$ on $\mathbb{T}$ such that $\mu_a$ is discrete, $\mu_c$ is singular continuous  and $\mu=\mu_a+\mu_c$ \cite[Proposition 7.13]{measure}. Consequently, any singular inner function $S_\mu$ can be written as
\begin{equation*}{\label{sdecom}}
	S_\mu=S_{\mu_a}S_{\mu_c}.
\end{equation*}
We refer this as the \textit{singular inner factorization} of $S_\mu$.
Hence, any inner function $\theta$ has a unique factorization (where the unimodular constant is absorbed by the Blaschke factor),

\begin{equation*}\label{idecom}
	\theta =BS_\mu= BS_{\mu_a}S_{\mu_c},
\end{equation*} where $B$ is the Blaschke component in the inner factorization  of $\theta$. We refer this as the \textit{inner factorization} of $\theta$.

Based on the position and number of fixed points in $\overline{\mathbb{D}}$, we can classify all automorphisms of $\mathbb{D}$. Any non-identity disk automorphism is:

\begin{enumerate}
	\item \textbf{Elliptic} if it has exactly one fixed point in $\mathbb{D}$.
	\item \textbf{Hyperbolic} if it has exactly two fixed points on $\mathbb{T}$.
	\item \textbf{Parabolic} if it has exactly one fixed point on $\mathbb{T}$.
\end{enumerate}

We conclude this section by recalling the Denjoy–Wolff theorem \cite[Page~78]{shapiro1}. Let $\phi$ be a holomorphic self-map of $\mathbb{D}$. If $\phi$ is not an elliptic automorphism, then there exists a point $p \in \overline{\mathbb{D}}$ such that the iterates $\phi^{[n]}$ (the $n$-fold composition of $\phi$) converge uniformly on compact subsets of $\mathbb{D}$ to the constant function $p$. This point $p$ is called the \emph{Denjoy–Wolff point} of $\phi$.

Throughout the article, $\phi$ denotes the disk automorphism, unless otherwise specified.

\section{Eigenfunctions of composition operators}
In this section, we are primarily interested in  the inner function solutions of the Schröder equation for the composition operators induced by automorphisms of $\mathbb{D}$; that is, to determine the set of all inner functions $\theta$ such that $\theta \circ \phi=\lambda \theta$, for some constant $\lambda$, where $\phi$ is an automorphism of $\mathbb{D}$. 

Let $\theta=BS_\mu$ be the inner factorization. Then we have $C_\phi(\theta H^2)\subseteq \theta H^2$ if and only if $C_\phi(BH^2)\subseteq B H^2$  and $C_\phi(S_\mu H^2)\subseteq S_\mu H^2$ \cite[Corollary 3.11]{article1}. In a similar manner, we could easily prove that, $\theta$ is an eigenfunction of $C_\phi$ if and only if $B$ and $S_\mu$ are eigenfunctions of $C_\phi$. Hence, it suffices to determine all Blaschke and singular inner eigenfunctions separately.

We now proceed to consider the singular inner eigenfunctions. 
The following theorem presents a decomposition property of invariant singular Beurling subspaces and  singular inner eigenfunctions of $C_\phi$, where $\phi$ is an automorphism.
\begin{theorem}\label{splitting}
Let $\phi$ be an automorphism of $\mathbb{D}$ and let $S_\mu= S_{\mu_a}S_{\mu_c}$ be the singular inner factorization of the singular inner function $S_\mu$. Then
\begin{enumerate}
	\item $C_\phi(S_\mu H^2)\subseteq S_\mu H^2$ if and only if  $C_\phi(S_{\mu_a} H^2)\subseteq S_{\mu_a }H^2$ and $C_\phi(S_{\mu_c} H^2)\subseteq S_{\mu_c }H^2$.
	\item $S_\mu$ is an eigenfunction of $C_\phi$ if and only if $S_{\mu_a}$ and $S_{\mu_c}$ are eigenfunctions of $C_\phi$.
\end{enumerate}
\end{theorem}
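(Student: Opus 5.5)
The plan is to turn both statements into statements about measures, using how $S_\mu\circ\phi$ transforms under an automorphism. For an automorphism $\phi$ and a singular inner function $S_\mu$, the composition $S_\mu\circ\phi$ is again a singular inner function. Indeed, it is inner and has no zeros, and $\phi$ extends to a homeomorphism of $\overline{\mathbb{D}}$. Its measure is $\mu^\phi:=(|\phi'|\,d\mu)\circ\phi$, the pullback of $\mu$ under the boundary map $\phi|_{\mathbb{T}}$ weighted by the Poisson-kernel factor. To see this, take the Herglotz representation $-\log|S_\mu|=P[\mu]$. Then $P[\mu]\circ\phi$ is a positive harmonic function, and its boundary measure is obtained by the change of variables $t=\phi(s)$ on $\mathbb{T}$. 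The key points are:
\begin{itemize}
\item the map $\mu\mapsto\mu^\phi$ is linear and positive;
\item it sends point masses to point masses, $\delta_{p}\mapsto c\,\delta_{\phi^{-1}(p)}$ with $c>0$, since $\phi$ is a bijection of $\mathbb{T}$;
\item it sends continuous singular measures to continuous singular measures, since the boundary map is a diffeomorphism and preserves both null sets and atoms.
\end{itemize}
Hence $(\mu^\phi)_a=(\mu_a)^\phi$ and $(\mu^\phi)_c=(\mu_c)^\phi$. Consequently $S_\mu\circ\phi\sim S_{\mu_a^\phi}S_{\mu_c^\phi}$ is exactly the singular inner factorization of $S_\mu\circ\phi$.

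For part (2), $S_\mu\circ\phi=\lambda S_\mu$ holds for some $\lambda$ if and only if $S_{\mu^\phi}\sim S_\mu$. This is equivalent to $\mu^\phi=\mu$, by uniqueness of the representing measure. By uniqueness of the decomposition $\mu=\mu_a+\mu_c$ and the previous paragraph, $\mu^\phi=\mu$ holds if and only if $\mu_a^\phi=\mu_a$ and $\mu_c^\phi=\mu_c$. That is exactly the statement that $S_{\mu_a}$ and $S_{\mu_c}$ are eigenfunctions.

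For part (1), note that $C_\phi(S_\mu H^2)\subseteq S_\mu H^2$ is equivalent to $S_\mu\mid S_\mu\circ\phi$ in the inner-divisibility sense. The forward direction of this equivalence is immediate by applying it to $f=1$. For the converse, $S_\mu\circ\phi=S_\mu u$ with $u$ inner, so $(S_\mu h)\circ\phi=S_\mu\, u\,(h\circ\phi)\in S_\mu H^2$. The divisibility $S_\mu\mid S_{\mu^\phi}$ is in turn equivalent to $\mu\le\mu^\phi$, by the standard fact that singular inner divisibility corresponds to the order on measures. Now $\mu^\phi-\mu\ge0$ splits into discrete and continuous parts $(\mu_a^\phi-\mu_a)+(\mu_c^\phi-\mu_c)$. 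A positive measure has positive discrete and continuous parts: evaluating on singletons gives positivity of the atomic part, and restricting to the complement of the countable atom set gives positivity of the continuous part. So $\mu\le\mu^\phi$ holds if and only if $\mu_a\le\mu_a^\phi$ and $\mu_c\le\mu_c^\phi$, which gives the claim. The reverse implication is the trivial direction, since products of divisibilities multiply.

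The main obstacle is the first step: rigorously justifying the formula for the measure of $S_\mu\circ\phi$ and that it respects the atomic/continuous splitting. I would do this via weak-star boundary limits of $P[\mu]\circ\phi$ and the explicit identity $P_z(\phi(s))\,|\phi'(s)|=P_{\phi^{-1}(z)}(s)$-type conformal invariance of the Poisson kernel. Everything after that is measure-theoretic bookkeeping.
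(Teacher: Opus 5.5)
Your proposal is correct and follows essentially the paper's route. Both arguments identify the measure of $S_\mu\circ\phi$ (the paper cites Jones's lemma for $v(E)=\int_{\phi(E)}\frac{1-|\phi(0)|^2}{|t-\phi(0)|^2}\,d\mu(t)$), check that this transformation sends discrete measures to discrete ones and continuous ones to continuous ones, and conclude by uniqueness of the discrete/continuous decomposition; your splitting of $\mu\le\mu^\phi$ into its two parts is the same step as the paper's decomposition of the quotient $S_\mu\circ\phi/S_\mu=cS_{\nu_a}S_{\nu_c}$.

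One small correction: the weight in $\mu^\phi$ is $|(\phi^{-1})'|$ on $\phi(E)$, that is $\mu^\phi(E)=\int_{\phi(E)}|(\phi^{-1})'(t)|\,d\mu(t)$, and not $|\phi'|$ as you wrote. Since you only use positivity, linearity and preservation of atoms and continuity, this does not affect your argument.
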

\begin{proof}
Proof of (1): Assume that $C_\phi(S_\mu H^2)\subseteq S_\mu H^2$. By \cite[Theorem 2.3]{buerlingtype}, we have $
\frac{S_\mu \circ \phi}{S_\mu}=f \in H^\infty$. In fact, $f$ is a singular inner function; say, $f=cS_{\nu_a}S_{\nu_c}$. Since  $S_\mu= S_{\mu_a}S_{\mu_c}$,
we have, $$  (S_{\mu_a}\circ \phi)(S_{\mu_c}\circ \phi)=cS_{\mu_a}S_{\mu_c}S_{\nu_a}S_{\nu_c}.$$ By \cite[Lemma 4]{jones}, 
$$S_{\mu_a}\circ \phi\sim S_{v_1} \text{ and }S_{\mu_c}\circ \phi\sim S_{v_2},$$
where the singular measures $v_1, \,v_2$ are defined as $v_1(E)=\int_{\phi(E)}\frac{1-|\phi(0)|^2}{|t-\phi(0)|^2}d\mu_a(t)$ and $v_2(E)=\int_{\phi(E)}\frac{1-|\phi(0)|^2}{|t-\phi(0)|^2}d\mu_c(t)$, for any Borel subset $E$ of $\mathbb{T}$. From the definition of $v_1$, we can easily see that $supp(v_1)\subseteq \phi^{-1}(supp(\mu_a))$, which makes $supp(v_1)$ countable. Moreover, for any $z \in \mathbb{T}$,
$v_2(\{z\})=\frac{1-|\phi(0)|^2}{|z-\phi(0)|^2}\mu_c(\{\phi(z)\})=0.$
Thus, by the definition, $v_1$ is discrete and ${v_2}$ singular continuous. By the uniqueness of decomposition of singular measure, we get
$$(S_{\mu_a}\circ \phi)\sim S_{\mu_a}S_{\nu_a}
\text{ and }
(S_{\mu_c}\circ \phi)\sim S_{\mu_c}S_{\nu_c}.$$
Hence, \cite[Theorem 2.3]{buerlingtype} implies $C_\phi(S_{\mu_a} H^2)\subseteq S_{\mu_a }H^2$ and $C_\phi(S_{\mu_c} H^2)\subseteq S_{\mu_c }H^2$.

Conversely, assume that $C_\phi(S_{\mu_a} H^2)\subseteq S_{\mu_a }H^2$ and $C_\phi(S_{\mu_c} H^2)\subseteq S_{\mu_c }H^2$. Then, by applying \cite[Theorem 2.3]{buerlingtype}, we obtain $$\dfrac{S_\mu  \circ \phi}{S_\mu}=\Big(\dfrac{S_{\mu_a}\circ \phi}{S_{\mu_a}}\Big)\Big(\dfrac{S_{\mu_c}\circ \phi}{S_{\mu_c}}\Big)\in H^\infty.$$
Thus, $C_\phi(S_\mu H^2)\subseteq S_\mu H^2$.

Proof of (2): Let $S_\mu$ be an eigenfunction of $C_\phi$. Then $S_\mu \circ \phi=\lambda S_\mu$ for some $\lambda \in \mathbb{T}$. Thus, $$  (S_{\mu_a}\circ \phi)(S_{\mu_c}\circ \phi)=\lambda S_{\mu_a}S_{\mu_c}.$$
By using the similar technique given in proof of (1), we get
$$(S_{\mu_a}\circ \phi)=\lambda_1 S_{\mu_a}
\text{ and }
(S_{\mu_c}\circ \phi)=\lambda_2 S_{\mu_c}, $$ for some $\lambda_1, \lambda_2 \in \mathbb{T}$. The converse part is trivial.

\end{proof}
The decomposition discussed above is, in fact, not trivial. Let $\theta=\theta_1 \theta_2$, for some inner functions $\theta, \theta_1,\theta_2$. For an automorphism $\phi$, $\theta$ being an eigenfunction of $C_\phi$ does not guarantee that either of $\theta_1$ and $\theta_2$ be the eigenfunctions of $C_\phi$.
The following example will demonstrate this fact.

\begin{example}
Let $\phi(z)=-z$, for $z \in \mathbb{D}$. For $z \in \mathbb{D}$, let $S_1(z)=e^{\frac{z+1}{z-1}}, S_2(z)=e^{\frac{z-1}{z+1}}$ and  $B_{\alpha}(z):=\frac{\alpha-z}{1-\bar{\alpha}z}$, for $\alpha \in \mathbb{D}$. Define  $\theta=B_{\frac{1}{2}}B_{\frac{-1}{2}}S_1S_2$. We can easily verify that, for any $\alpha \in \mathbb{D}$, $B_{\alpha}\circ \phi=-B_{-\alpha}$. Also $S_1\circ \phi=S_2$ and $S_2\circ \phi=S_1$. Thus  $B_{\frac{1}{2}}B_{\frac{-1}{2}}$ and $S_1S_2$ are eigenfunctions of $C_\phi$. None of other combinations of the individual components of $\theta$ can be an eigenfunction of $C_\phi$. The above observation also extends to the setting of Beurling subspaces.
\end{example}

As a refined version of Theorem \ref{splitting}, using \cite[Corollary 3.11]{article1}, we have the following result:
\begin{corollary}\label{corsplit}
	Let $\phi$ be an automorphism of $\mathbb{D}$ and let $\theta= BS_{\mu_a}S_{\mu_c}$ be the inner factorization of the function $\theta$. Then
	\begin{enumerate}
		\item $C_\phi(\theta H^2)\subseteq \theta H^2$ if and only if  $C_\phi(B H^2)\subseteq BH^2$, $C_\phi(S_{\mu_a} H^2)\subseteq S_{\mu_a }H^2$ and $C_\phi(S_{\mu_c} H^2)\subseteq S_{\mu_c }H^2$.
		\item $\theta$ is an eigenfunction of $C_\phi$ if and only if $B$, $S_{\mu_a}$ and $S_{\mu_c}$ are eigenfunctions of $C_\phi$.
	\end{enumerate}

\end{corollary}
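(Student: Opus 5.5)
The plan is to chain two splitting results: first peel off the Blaschke factor, then split the singular part. Write $\theta=BS_\mu$ with $S_\mu=S_{\mu_a}S_{\mu_c}$, where the unimodular constant is absorbed into $B$ as in the inner factorization.

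For part (1), we first apply \cite[Corollary 3.11]{article1}. It gives that $C_\phi(\theta H^2)\subseteq\theta H^2$ if and only if both $C_\phi(BH^2)\subseteq BH^2$ and $C_\phi(S_\mu H^2)\subseteq S_\mu H^2$. Then part (1) of Theorem \ref{splitting} replaces the second condition by the pair of conditions on $S_{\mu_a}H^2$ and $S_{\mu_c}H^2$. Combining the two equivalences gives the claim.

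For part (2), we use the remark at the start of this section: $\theta$ is an eigenfunction of $C_\phi$ if and only if $B$ and $S_\mu$ are. Since that remark was only asserted, we would justify it.

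Suppose $\theta\circ\phi=\lambda\theta$. Because $\phi$ is an automorphism, $B\circ\phi$ is again a Blaschke product; its zeros are $\phi^{-1}$ of the zeros of $B$, and the Blaschke condition is preserved because $1-|\phi^{-1}(a)|$ is comparable to $1-|a|$. Likewise $S_\mu\circ\phi$ is a zero-free inner function, hence a constant times a singular inner function. Indeed, by \cite[Lemma 4]{jones} it is similar to $S_{\nu}$ for the pulled-back singular measure $\nu$.

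So $(B\circ\phi)(S_\mu\circ\phi)=\lambda BS_\mu$ gives two inner factorizations of the same function. Uniqueness of the Blaschke–singular factorization up to unimodular constants then yields $B\circ\phi=\lambda_1B$ and $S_\mu\circ\phi=\lambda_2S_\mu$. The converse is immediate by multiplying the two eigen-relations.

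Finally, part (2) of Theorem \ref{splitting} splits the eigenfunction condition on $S_\mu$ into the conditions on $S_{\mu_a}$ and $S_{\mu_c}$. The converse direction of the whole statement is the trivial product of eigen-relations.

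The only point needing care is the verification that $B\circ\phi$ is Blaschke and $S_\mu\circ\phi$ is singular inner, which is where we use that $\phi$ is an automorphism. The rest is a direct combination of the cited results with uniqueness of the factorizations.
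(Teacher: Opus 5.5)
Your proposal follows the paper's argument. Part (1) is \cite[Corollary 3.11]{article1} followed by Theorem \ref{splitting}(1). Part (2) is the Blaschke/singular splitting of the eigen-equation followed by Theorem \ref{splitting}(2); the paper only asserts that splitting can be proved ``in a similar manner'', and your uniqueness-of-factorization argument correctly supplies it.

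One justification needs repair: the reason you give for $B\circ\phi$ being a Blaschke product does not work. The zeros of every nonzero $H^\infty$ function satisfy the Blaschke condition, so checking it for $\phi^{-1}(Z(B))$ cannot rule out a singular factor. Argue instead as follows. If $B\circ\phi=B'S'$ with $S'$ a nonconstant singular inner function, then $B=(B'\circ\phi^{-1})(S'\circ\phi^{-1})$. This gives $B$ a nonconstant zero-free inner divisor $S'\circ\phi^{-1}$, contradicting that $B$ has trivial singular factor. (The paper simply cites \cite[Proposition 3.7]{article1} for this fact in the proof of Proposition \ref{bl}.)
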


The characterization for a Blaschke Beurling subspace and discrete singular Beurling subspace to be invariant under $C_\phi$, can be found in \cite[Corollary 2.4]{buerlingtype} and \cite[Corollary 2.15]{valentine}, respectively. Based on this, it is natural to propose the following question:
\begin{question}
	Let $\mu$ be a singular continuous measure on $\mathbb{D}$. What will be a measure theoretic characterization for $C_\phi(S_{\mu} H^2)\subseteq S_{\mu }H^2$?
\end{question}

Next, we move to the problem of computing the inner eigenfunctions of the composition operators. To do this, in view of Theorem \ref{splitting}, we now consider the particular case of singular inner eigenfunctions associated with a discrete  measure. The following theorem provides a characterization of discrete  inner eigenfunctions of $C_\phi$. 
\begin{theorem}\label{discrete}
	Let $\phi$ be an automorphism of $\mathbb{D}$ and $\mu$ be a discrete singular measure on $\mathbb{T}$ with $\mu({\mathbb{T}})<\infty$. Then
	$S_\mu$ is an eigenfunction of $C_\phi$ if and only if
	\begin{equation}\label{condition}
		\mu(\{\phi(w)\})=|\phi^{'} (w)|\mu(\{w\}), \,\, \, w \in atom(\mu).
	\end{equation} 
	
\end{theorem}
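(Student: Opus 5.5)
The plan is to compute $S_\mu\circ\phi$ explicitly as a singular inner function and compare measures. As in the proof of Theorem \ref{splitting}, \cite[Lemma 4]{jones} gives $S_\mu\circ\phi\sim S_{\nu}$. Here
$$\nu(E)=\int_{\phi(E)}\frac{1-|\phi(0)|^2}{|t-\phi(0)|^2}\,d\mu(t)$$
for Borel sets $E\subseteq\mathbb{T}$. Since $\mu$ is discrete, $\nu$ is discrete as well, with atoms contained in $\phi^{-1}(atom(\mu))$. Evaluating on singletons gives
$$\nu(\{w\})=\frac{1-|\phi(0)|^2}{|\phi(w)-\phi(0)|^2}\,\mu(\{\phi(w)\}).$$
The key computational step is to identify this Poisson-type factor with $1/|\phi'(w)|$ for $w\in\mathbb{T}$. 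Equivalently, $|(\phi^{-1})'(\phi(w))|=\frac{1-|\phi(0)|^2}{|\phi(w)-\phi(0)|^2}$. To verify this, write $\phi^{-1}=\lambda\frac{a-\zeta}{1-\bar a\zeta}$ with $a=\phi(0)$. Then $|(\phi^{-1})'(\zeta)|=\frac{1-|a|^2}{|1-\bar a\zeta|^2}=\frac{1-|a|^2}{|\zeta-a|^2}$ for $|\zeta|=1$. The chain rule $(\phi^{-1})'(\phi(w))\phi'(w)=1$ then gives $\nu(\{w\})=\mu(\{\phi(w)\})/|\phi'(w)|$.

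Next, $S_\mu$ is an eigenfunction iff $S_\mu\circ\phi=\lambda S_\mu$ for some $\lambda\in\mathbb{T}$, that is, iff $S_\nu\sim S_\mu$. By uniqueness of the measure representing a singular inner function up to a unimodular constant, this holds iff $\nu=\mu$. Both measures are discrete, so this is equivalent to $\nu(\{w\})=\mu(\{w\})$ for all $w\in\mathbb{T}$. By the computation above, that means
$$\mu(\{\phi(w)\})=|\phi'(w)|\,\mu(\{w\})\quad\text{for all } w\in\mathbb{T}.\qquad(\ast)$$

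The remaining point, which I expect to be the main subtlety, is reducing $(\ast)$ to the statement's condition, which only ranges over $w\in atom(\mu)$. The direction from $(\ast)$ to \eqref{condition} is trivial. For the converse, assume \eqref{condition}. Then for $w\in atom(\mu)$ we have $\mu(\{\phi(w)\})>0$, so $\phi$ maps $atom(\mu)$ into itself. One must also show $(\ast)$ when $\mu(\{w\})=0$, i.e.\ that $\phi(w)\notin atom(\mu)$ for such $w$; this requires $\phi(atom(\mu))=atom(\mu)$. I would use the finiteness of the total mass. Summing \eqref{condition} over $w\in atom(\mu)$ and changing variables gives
$$\sum_{p\in\phi(atom(\mu))}\mu(\{p\})=\sum_{w\in atom(\mu)}|\phi'(w)|\,\mu(\{w\})=\nu'(\mathbb{T}),$$
where $\nu'=\mu\circ\phi$-type pushforward. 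A cleaner route avoids summation altogether. Condition \eqref{condition} says exactly that $\nu$ and $\mu$ agree on $atom(\mu)$, and $\nu\ge0$, so $\nu\ge\mu$. Moreover $S_\nu\sim S_\mu\circ\phi$ has total mass $\nu(\mathbb{T})=-\log|S_\mu(\phi(0))|$. Since $S_\mu(\phi(0))=\exp(-\int P_{\phi(0)}\,d\mu)$, it follows that $\nu(\mathbb{T})=\int P_{\phi(0)}\,d\mu$, which I need to compare with $\mu(\mathbb{T})=-\log|S_\mu(0)|$. These are not equal in general, so instead I would iterate using $\phi^{-1}$.

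Applying the same pullback construction with $\phi^{-1}$ to $\nu$ recovers $\mu$. The operation $\mu\mapsto\nu$ is monotone, and its inverse, coming from $\phi^{-1}$, is also monotone. Writing $\nu=\mu+\rho$ with $\rho\ge0$ supported off $atom(\mu)$, I would try to show $\rho=0$. The plan is to use that $\rho$ is pulled back from the part of $\mu$ sitting on $atom(\mu)\setminus\phi(atom(\mu))$, and then to exploit summability of the masses along $\phi$-orbits together with \eqref{condition} to rule this out. I expect this last step to be the genuine obstacle, and I would check it against orbit structures for parabolic and hyperbolic $\phi$ before finalizing.
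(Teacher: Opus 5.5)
Your reduction is correct. It is essentially the paper's route, with \cite[Lemma 4]{jones} in place of the Borel-set identity $\mu(\phi(E))=\int_E|\phi'|\,d\mu$ from \cite{orbits}. It shows that $S_\mu$ is an eigenfunction of $C_\phi$ if and only if $\nu=\mu$, i.e.\ if and only if $(\ast)$ holds for \emph{every} $w\in\mathbb{T}$, and the forward implication follows at once.

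\textbf{The gap.} The step you left open cannot be closed, because for non-elliptic $\phi$ the converse is false as stated. Let $\phi$ be parabolic or hyperbolic and $a\in\mathbb{T}$ with $\phi(a)\neq a$. Take the one-sided orbit measure $\mu=\sum_{n\ge 0}c_n\delta_{\phi^{[n]}(a)}$ with $c_0=1$ and $c_{n+1}=|\phi'(\phi^{[n]}(a))|\,c_n$.
\begin{itemize}
\item The points $\phi^{[n]}(a)$, $n\in\mathbb{Z}$, are distinct.
\item $\sum_n c_n<\infty$ by the same ratio/Raabe estimates used in Theorem \ref{disingular}. For example, in the parabolic case with fixed point $1$, conjugating by $\Gamma(z)=\frac{1+z}{1-z}$ to $s\mapsto s+i\beta$ gives $|\phi'(w)|=\frac{1+y^2}{1+(y+\beta)^2}$ for $\Gamma(w)=iy$. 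Hence $c_n=\frac{1+y_0^2}{1+(y_0+n\beta)^2}$, where $\Gamma(a)=iy_0$.
\item Condition \eqref{condition} holds at every atom.
\item Set $b=\phi^{-1}(a)$, which is not an atom of $\mu$. Your own formula gives $\nu=\mu+\frac{1}{|\phi'(b)|}\delta_b$.
\end{itemize}
Hence $S_\mu\circ\phi\sim S_\mu\,S_{\delta_b/|\phi'(b)|}$, which is not a unimodular multiple of $S_\mu$, although $S_\mu H^2\in Lat(C_\phi)$. So your $\rho=\nu-\mu$ can genuinely be nonzero. Mass sitting on $atom(\mu)\setminus\phi(atom(\mu))$ is compatible with $\mu(\mathbb{T})<\infty$, since masses decay along forward orbits. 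No summability argument can exclude it.

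\textbf{The paper's proof and the correct criterion.} The paper's converse hides exactly this point in its claim that it is ``clear'' from \eqref{condition} that $\phi(w)$ is an atom if and only if $w$ is. Only the implication ``$w$ atom $\Rightarrow$ $\phi(w)$ atom'' follows. The correct criterion is your $(\ast)$ for all $w\in\mathbb{T}$. Equivalently, it is \eqref{condition} together with $\phi^{-1}(atom(\mu))\subseteq atom(\mu)$. This extra hypothesis holds automatically for the two-sided orbits in Theorem \ref{disingular}, so that result survives.

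\textbf{Where your idea does work.} Your summability idea succeeds when $\phi$ is elliptic. Write $\phi=\tau\circ R\circ\tau$ with $R$ a rotation and $\tau$ the involutive automorphism exchanging the fixed point and $0$. Then $|\phi'(w)|=g(w)/g(\phi(w))$ on $\mathbb{T}$, with $g=|\tau'|$ bounded above and below. So masses along an infinite forward orbit stay comparable to the initial mass and cannot be summable. A finite forward orbit is periodic. Either way $atom(\mu)\setminus\phi(atom(\mu))=\emptyset$, and the statement holds in the elliptic case.
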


\begin{proof}
Suppose that  $S_\mu$ is an eigenfunction of $C_\phi$.  Then   $S_\mu$ is also an eigenfunction of $C_{\phi^{-1}}$, which implies $S_\mu\circ \phi^{-1}\sim S_\mu$. 
 By \cite[Lemma 3.6]{orbits}, we have 
\begin{equation}\label{measure}
	\mu(\phi(E))=\int\limits_{E}|\phi’(t)|d\mu(t),
\end{equation}
for all Borel subsets $E$ of $\mathbb{T}$. For an arbitrary $w \in atom(\mu)$, set $E=\{w\}$. By substituting $E$ in the equation (\ref{measure}), we have
$\mu(\{\phi(w)\})=|\phi’(w)|\mu(\{w\})$.
	
	Conversely, assume  the condition (\ref{condition}). Let $E$ be an arbitrary Borel subset of $\mathbb{T}$. Then,
$$\mu(\phi(E))=\sum_{\substack{w\in E\\\phi(w)\in atom(\mu)}}\mu(\{\phi(w)\}).$$
By condition (\ref{condition}), it is clear that $\phi(w)$ is an atom of $\mu$ if and only if $w$ is an atom of $\mu$. Also, it is worth to note that, $\mu(w)=0$, for all $w \notin atom(\mu)$. Thus, again by condition (\ref{condition}), $$\mu(\phi(E))=\sum_{w\in E}|\phi'(w)|\mu(\{w\})=\int\limits_{E}|\phi’(t)|d\mu(t).$$

Therefore, by \cite[Lemma 3.6]{orbits}, $|S_\mu\circ \phi^{-1}(z)|= |S_\mu(z)|$ for all $z \in \mathbb{D}$. Since two singular inner functions with same modulus have same associated measure (see \cite[Page~71]{garnett}), we have $S_\mu\circ \phi^{-1}\sim S_\mu$. Hence, $S_\mu$ is an eigenfunction  of $C_\phi$.
\end{proof} 
For any $w \in \mathbb{T}$, we denote  $Orb_\phi(w)=\{\phi^{[k]}(w):k\in \mathbb{Z}\}$.
\begin{remark}
	The above theorem also asserts that for a finite discrete  measure $\mu$ on $\mathbb{T}$, $S_\mu$ is an eigenfunction of $C_\phi$  if and only if $S_{\mu_{w}}$ is eigenfunction for $C_\phi$ for every $w\in atom(\mu) $ where $atom(\mu_{w})=Orb_\phi(w)$.
\end{remark}
We look into the above result more closely for all the cases of $\phi$, namely when $\phi$ is elliptic, parabolic and hyperbolic automorphism. First, we start with the case that $\phi$ is an elliptic automorphism. Thus, $\phi$ has a unique fixed point in $\mathbb{D}$, say $p$.
     If $\phi'(p)$ is not a primitive root of unity (commonly known as the irrational elliptic case), then by \cite[Theorem 2]{jones}, we have $C_\phi(S_\mu H^2)\nsubseteq S_\mu H^2$, for any singular inner function  $S_\mu$. Thus, $S_\mu$ cannot be an eigenfunction  for $C_\phi$. 
     
     On the other hand, let $\phi$ be an elliptic automorphism fixing $p$ and let $\phi’(p)$ is a primitive n$^{th}$ root of unity  for some $n \in \mathbb{N}$ (known as the rational elliptic case). First we start with a simpler case that $p=0$. For a discrete measure $\mu$ on $\mathbb{T}$, if $ S_{\mu}$ is an eigenfunction of $C_\phi$, then by Theorem \ref{discrete}, we have
     $$\mu(\{\phi(w)\})=\mu(\{w\}) |\phi^{'}(w)|=\mu(\{w\}), w \in \mathbb{T}.$$
     Iteratively, we obtain that $\mu$ is constant on each orbit. This yields the following result. 
     
\begin{theorem}
	Let $\mu$ be a discrete measure, $\phi$ be an rational elliptic automorphism fixing $0$ and let $\phi’(p)$ is a primitive n$^{th}$ root of unity  for some $n \in \mathbb{N}$. Then, $S_\mu$ is an eigenfunction of $C_\phi$  if and only if $\mu$ has the form $$\mu=\sum\limits_{m}a_m\sum\limits_{k=0}^{n-1}\delta_{\phi^{[k]}(w_m)},$$ where $\{a_m\}_{m\in \mathbb{N}}$ is a positive real valued sequence (possibly finite) with $\sum_{m}a_m<\infty$ and $\{w_m\}_{m\in \mathbb{N}}$ is a sequence on unit circle.
\end{theorem}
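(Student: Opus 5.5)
The plan is to reduce everything to Theorem \ref{discrete}. First we identify the symbol. An elliptic automorphism fixing $0$ is a rotation, $\phi(z)=\lambda z$ with $\lambda=\phi'(0)$ a primitive $n$-th root of unity. Consequently $|\phi'(w)|=1$ for every $w\in\mathbb{T}$, and $\phi^{[n]}=\mathrm{id}$. Moreover $\phi^{[k]}(w)\neq w$ for $0<k<n$, because $\lambda^k\neq 1$. Hence every orbit $Orb_\phi(w)=\{w,\lambda w,\dots,\lambda^{n-1}w\}$ has exactly $n$ distinct points, and $\mathbb{T}$ is partitioned into these orbits.

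For necessity, suppose $S_\mu$ is an eigenfunction. By Theorem \ref{discrete}, condition (\ref{condition}) gives $\mu(\{\phi(w)\})=\mu(\{w\})$ for all $w\in atom(\mu)$. In particular $\phi(atom(\mu))\subseteq atom(\mu)$, and iterating shows $\mu$ is constant on $Orb_\phi(w)$ for each atom $w$. We then choose one representative $w_m$ from each of the countably many distinct orbits meeting $atom(\mu)$, and set $a_m=\mu(\{w_m\})>0$. Since $\mu$ is discrete and supported on the disjoint union of these orbits, we get
$$\mu=\sum_m a_m\sum_{k=0}^{n-1}\delta_{\phi^{[k]}(w_m)}.$$
Finiteness follows from $\mu(\mathbb{T})=n\sum_m a_m<\infty$, so $\sum_m a_m<\infty$.

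For sufficiency, suppose $\mu$ has the stated form. We should note that the $w_m$ may be taken with distinct orbits; if two orbits coincide, the corresponding terms simply merge with weight $a_m+a_{m'}$, and the form is preserved. Then $\mu$ is a finite discrete measure whose atoms are $\bigcup_m Orb_\phi(w_m)$. Since $\phi$ permutes each orbit cyclically, $\mu(\{\phi(w)\})=\mu(\{w\})$ for every atom $w$. Because $|\phi'(w)|=1$, this is exactly condition (\ref{condition}), and Theorem \ref{discrete} gives that $S_\mu$ is an eigenfunction.

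The only mildly delicate point is the bookkeeping. In the necessity direction we must check that the orbits are disjoint and each has exactly $n$ points, so the decomposition is well defined; this uses primitivity of $\lambda$. In the sufficiency direction we must handle possible repetitions among the $w_m$, which may be arbitrary points of the unit circle. Neither requires more than the rotation structure.
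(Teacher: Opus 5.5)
Your proposal is correct and follows the paper's argument: $\phi$ is a rotation, so $|\phi'|\equiv 1$ on $\mathbb{T}$, and Theorem \ref{discrete} then reduces to $\mu(\{\phi(w)\})=\mu(\{w\})$, i.e.\ $\mu$ is constant on each orbit. Your extra bookkeeping only fills in details the paper leaves implicit: primitivity gives orbits of exactly $n$ points, you pick representatives, and you merge repeated orbits in the converse.
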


If $\phi$ is a rational elliptic automorphism fixing a non-zero $p \in \mathbb{D}$, the discrete singular inner eigenfunctions of $C_\phi$ have the same form as given in Theorem \ref{disingular}, where the index set of the summation is finite. The similar proof work for this case as well.

Now, we move to the case that $\phi$ is a non-elliptic automorphism. Since the atom set is a disjoint union of distinct orbits, any singular inner eigenfunction of $C_\phi$, arising from a discrete measure, is a product (possibly finite) of the singular inner functions whose atom set is a single orbit. Thus, it is enough to focus on this elementary model. Consider the measure $\mu$  whose atom set is an orbit of $\phi$ at some element $a \in \mathbb{T}$, i.e., 
 $$atom(\mu)=Orb_{\phi}(a). $$
 As $\phi$ is non-elliptic, the orbit $Orb_{\phi}(a)$ is either singleton or countably infinite, depending on whether $a$ is a fixed point or not.

	Suppose $\phi(a)=a$. Then $atom(\mu)=\{a\}$. By Theorem \ref{discrete},
	$$|\phi’(a)|=\frac{\mu(\{\phi(a)\})}{{\mu(\{a\})}}=1,$$ which forces $\phi$ to be a parabolic automorphism. From lines of the proof of \cite[Lemma 5.4.6]{texthardy}, it is very clear that, for a parabolic automorphism $\phi$ of $\mathbb{D}$,  every unimodular constant is an eigenvalue of $C_\phi$ with an atomic singular inner eigenfunction (singular inner function having singleton atom set).

For $\phi(a)\neq a$, the following theorem gives a characterization for the singular inner eigenfunctions of $C_\phi$ arising from discrete measure with the atom set as an orbit at $a$.

\begin{theorem}\label{disingular}
	Let $\phi$ be a non-elliptic automorphism of $\mathbb{T}$ and let $\mu$ be a discrete measure such that $atom(\mu)=Orb_{\phi}(a)$, for some $a \in \mathbb{T}$ such that $\phi(a)\neq a$. Then $S_\mu$ is an eigenfunction of $C_\phi$ if and only if  $\mu$ is of the form
	$$\mu=\sum\limits_{n \in \mathbb{Z}}a_n\delta_{\phi^{[n]}(a)},$$
	where, $a_0>0$  and for $n \in \mathbb{N}$, 
$$a_n:=a_0\prod\limits_{k=0}^{n-1}|\phi^{'}(\phi^{[k]}(a))|\, \text
{  and  }\, a_{-n}:=\dfrac{a_0}{\prod\limits_{k=1}^{n}|\phi^{’}(\phi^{[-k]}(a))|}.$$
\end{theorem}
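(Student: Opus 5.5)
The plan is to reduce everything to Theorem \ref{discrete}, which says $S_\mu$ is an eigenfunction of $C_\phi$ if and only if $\mu(\{\phi(w)\})=|\phi'(w)|\mu(\{w\})$ for every $w\in atom(\mu)$. Since $\phi$ is non-elliptic and $\phi(a)\neq a$, the points $\phi^{[n]}(a)$, $n\in\mathbb{Z}$, are pairwise distinct. Otherwise some iterate $\phi^{[m]}$, $m\neq 0$, would fix a point of $\mathbb{T}$ other than the fixed points of $\phi$; but a non-elliptic automorphism and all its nontrivial iterates have the same fixed points, and a point with a finite orbit of length $>1$ would force $\phi$ to be periodic, hence elliptic. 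So we may write $\mu=\sum_{n\in\mathbb{Z}}a_n\delta_{\phi^{[n]}(a)}$ with $a_n=\mu(\{\phi^{[n]}(a)\})>0$, and the atoms are indexed bijectively by $\mathbb{Z}$.

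For the forward direction, I will apply condition (\ref{condition}) at $w=\phi^{[k]}(a)$. This gives the recursion
\[ a_{k+1}=|\phi'(\phi^{[k]}(a))|\,a_k,\qquad k\in\mathbb{Z}. \]
Iterating upward from $k=0$ yields $a_n=a_0\prod_{k=0}^{n-1}|\phi'(\phi^{[k]}(a))|$. Iterating downward, with $a_{-k}=a_{-k+1}/|\phi'(\phi^{[-k]}(a))|$ for $k=1,\dots,n$, yields the stated formula for $a_{-n}$. Note that $|\phi'|>0$ on $\mathbb{T}$ for an automorphism, so the division is legitimate.

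For the converse, suppose $\mu$ has the stated form. The recursion above holds for every $k$ by construction: for $k\ge0$ directly, and for $k<0$ by telescoping the definition of $a_{-n}$. Hence (\ref{condition}) holds at every atom $\phi^{[k]}(a)$. Theorem \ref{discrete} then gives that $S_\mu$ is an eigenfunction, provided $\mu$ is a finite measure, which is a standing hypothesis of that theorem.

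The main obstacle is precisely this finiteness: we need $\sum_{n\in\mathbb{Z}}a_n<\infty$, which is implicitly required for $\mu$ to define $S_\mu$. I would verify it using the chain rule, $\prod_{k=0}^{n-1}|\phi'(\phi^{[k]}(a))|=|(\phi^{[n]})'(a)|$ and similarly for negative indices. This turns the sum into $a_0\sum_{n\in\mathbb{Z}}|(\phi^{[n]})'(a)|$. Since $\phi^{[n]}$ is an automorphism, $|(\phi^{[n]})'(a)|=(1-|\phi^{[n]}(0)|^2)/|1-\overline{\phi^{[n]}(0)}a|^2$ in the appropriate normalization; equivalently, it is the Poisson-kernel density of the pushforward of arc length. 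For hyperbolic and parabolic $\phi$, conjugating to the upper half-plane (a dilation $z\mapsto kz$, or a translation $z\mapsto z+1$) makes these quantities explicitly summable: geometric decay in both directions in the hyperbolic case, and $O(1/n^2)$ in the parabolic case. Alternatively, the sum is the total arc-length measure of the disjoint arcs $\phi^{[n]}(I)$ for a small arc $I$ around $a$, up to bounded distortion, hence finite. If this verification is considered part of the hypothesis, meaning $\mu$ is assumed finite, the converse is immediate from Theorem \ref{discrete}.
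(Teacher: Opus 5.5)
Your proposal is correct, and its skeleton is the paper's: reduce to Theorem \ref{discrete}, read off the two-sided recursion $a_{k+1}=|\phi'(\phi^{[k]}(a))|\,a_k$, and recognize that the only real content of the converse is $\mu(\mathbb{T})<\infty$. The difference is in how you prove that finiteness.

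The paper works with the ratios directly. For hyperbolic $\phi$ it applies the ratio test: $a_{n+1}/a_n\to|\phi'(p)|<1$ at the Denjoy--Wolff point, and $a_{-n}/a_{-n+1}\to 1/|\phi'(q)|<1$ at the repelling point. For parabolic $\phi$ the ratio test is inconclusive. The paper then normalizes the fixed point to $1$, uses the explicit formula for $\phi^{[n]}$, and computes $\lim_{n} n\bigl(a_n/a_{n+1}-1\bigr)=2$ to invoke Raabe's test, repeating the argument for $\phi^{-1}$.

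You instead telescope via the chain rule to $a_n=a_0|(\phi^{[n]})'(a)|$ for all $n\in\mathbb{Z}$. After conjugation to the half-plane these terms are explicit: geometric decay in both directions for a dilation, and order $n^{-2}$ for a translation. Your fundamental-domain argument goes further and gives $\sum_n a_n\lesssim a_0\,2\pi/|I|$ with no case split at all. Your route is uniform and conceptual and avoids the rather delicate Raabe computation. The paper's route is more hands-on but needs only $|\phi'|$ at the fixed points plus the explicit parabolic iterates.

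Two details need tidying.
\begin{enumerate}
\item The Poisson-kernel identity should read $|(\phi^{[n]})'(a)|=(1-|\beta_n|^2)/|1-\overline{\beta_n}a|^2$ with $\beta_n=\phi^{[-n]}(0)$, not $\phi^{[n]}(0)$. This is harmless, since $n\mapsto -n$ does not change the sum over $\mathbb{Z}$.
\item The bounded-distortion claim needs its one-line justification. Choose $I$ inside the component of $\mathbb{T}$ minus the fixed points that contains $a$, with $\phi(I)\cap I=\emptyset$; then the arcs $\phi^{[n]}(I)$ are pairwise disjoint. The $\beta_n$ accumulate only at fixed points of $\phi$, which lie at positive distance from $\overline{I}$. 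Hence $|(\phi^{[n]})'(t)|/|(\phi^{[n]})'(a)|$ is bounded uniformly in $n\in\mathbb{Z}$ and $t\in I$.
\end{enumerate}
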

\begin{proof}
	Let $\phi$ be a non-elliptic automorphism of $\mathbb{T}$ with $\phi(a)\neq a$, for  $a\in \mathbb{T}$. As a consequence of \cite[Lemma 5.4.4, 5.4.8]{texthardy}, if $n \neq m $, then $\phi^{[n]}(a)\neq \phi^{[m]}(a)$, for $n,m \in \mathbb{Z}$. Let $\mu$ be a discrete measure on $\mathbb{T}$ such that $atom(\mu)=Orb_{\phi}(a)$ and let $a_0:=\mu(\{a\})$. 
	
	Assume that $S_\mu$ is an eigenfunction of $C_\phi$. 
By Theorem \ref{discrete}, we obtain
	$$\mu(\{\phi(a)\})=\mu(\{a\}) |\phi^{'}(a)|.$$
	Inductively, we obtain that for any $n \in \mathbb{N}$,
	\begin{equation*}\label{an}
		a_n:=\mu(\{\phi^{[n]}(a)\})=\mu(\{a\}) \prod\limits_{k=0}^{n-1}|\phi^{’}(\phi^{[k]}(a))|.
	\end{equation*}
and
	\begin{equation*}\label{a-n}
		a_{-n}:=\mu(\{\phi^{[-n]}(a)\})=\dfrac{\mu(\{a\})} {\prod\limits_{k=1}^{n}|\phi^{'}(\phi^{[-k]}(a))|}.
	\end{equation*}

	For the converse, let the measure $\mu$ be defined as in the hypothesis.  In view of the Theorem \ref{discrete}, to prove that $S_\mu$ is an eigenfunction of $C_\phi$, it is enough to prove that $\mu(\mathbb{T})=\sum_{n \in \mathbb{Z}}a_n<\infty.$ 
	For proving the convergence, we consider the hyperbolic and parabolic cases separately.
	
	Let $\phi$ be a hyperbolic automorphism fixing $p$ and $q \in \mathbb{T}$, with the Denjoy-Wolff point as $p$.
	Let $a_n$, $n \in \mathbb{Z}$, be defined as in the hypothesis. Then, we have,

 $$\lim\limits_{n \rightarrow \infty}\frac{a_{n+1}}{a_n}=\lim\limits_{n \rightarrow \infty}|\phi^{’}(\phi^{[n]}(a))|=|\phi^{'}(p)|<1$$ and

  $$\lim\limits_{n \rightarrow \infty}\frac{a_{-n}}{a_{-n+1}}=\lim\limits_{n \rightarrow \infty}\dfrac{1}{|\phi^{'}(\phi^{[-n]}(a))|}=\dfrac{1}{|\phi^{'}(q)|}<1.$$ Thus, by ratio test, the series $\sum_{n \in \mathbb{Z}}a_n$ converges.
	
	Let $\phi$ be a parabolic automorphism.  Without loss of generality, assume that $1$ is the fixed point of $\phi$. Let $\alpha:=\phi^{-1}(0)=re^{it}$. As both the fixed points of $\phi$ are $1$, we get  $\frac{-\alpha(1-\bar{\alpha})}{\bar{\alpha}(1-\alpha)}=1$ (see also \cite[Page ~$78 - 79$]{modelsubspace}). Consequently, $r=\cos t$. Also, note that, for a parabolic automorphism $\phi$ fixing $1$, by \cite[Lemma 5.4.4]{texthardy}, there exist  $b \in \mathbb{R}-\{0\}$, such that 
	$$\phi^{[n]}(z)=\dfrac{2iz+nb(1-z)}{2i+nb(1-z)}= 1-\dfrac{2i(1-z)}{2i+nb(1-z)}, \,\, z \in \mathbb{\overline{D}}.$$
Since $\phi(\alpha)=0$, we obtain $b=\frac{-2i\alpha}{1-\alpha}$. As the ratio test is inconclusive in this case, we use the Rabbe's test to prove the result.
Let $a_n$ be defined as mentioned in the hypothesis. We denote the real part of a complex number $z$ by $Re(z)$.
For $n \in \mathbb{N}$, we have
\begin{equation*}
\begin{split}
	\dfrac{a_n}{a_{n+1}}-1&=\dfrac{1}{|\phi^{'}(\phi^{[n]}(a))|}-1\\
	&=\dfrac{|1-\bar{\alpha}\phi^{[n]}(a)|^2}{1-|\alpha|^2}-1.\\
	&=\dfrac{2|\alpha|^2-2Re{\Big(\bar{\alpha}\phi^{[n]}(a)\Big)}}{1-|\alpha|^2}\\
	&=\dfrac{2\Big(|\alpha|^2-Re{\Big({\bar{\alpha}}\phi^{[n]}(a)\Big)}\Big)}{1-|\alpha|^2}\\
	&=\dfrac{2\Big(|\alpha|^2-Re(\alpha)-Re{\Big({\bar{\alpha}}(\phi^{[n]}(a)-1)\Big)}\Big)}{1-|\alpha|^2}\\
	&=\dfrac{-2}{1-|\alpha|^2}\Big(Re{\Big({\bar{\alpha}}(\phi^{[n]}(a)-1)\Big)\Big)}\\
\end{split}
\end{equation*}
The last step came from the fact that $|\alpha|^2-Re(\alpha)=r^2-r\cos t=r(r-\cos t)=0$. Here, 
\begin{equation*}
\begin{split}
	2Re{\Big({\bar{\alpha}}(\phi^{[n]}(a)-1)\Big)}&=-2Re{\Big(\dfrac{{\bar{\alpha}}2i(1-a)}{2i+nb(1-a)}\Big)}\\
	&=\dfrac{{-\bar{\alpha}}2i(1-a)}{2i+nb(1-a)}+\dfrac{{\alpha}2i(1-\bar{a})}{-2i+nb(1-\bar{a})}\\
	&=\dfrac{-4(\bar{\alpha}(1-a)-(\overline{1-a})\alpha)-2i(\bar{\alpha}-\alpha)nb|1-a|^2}{4+2inb(a-\bar{a})+n^2b^2|1-a|^2}
\end{split}
\end{equation*}
So that, \begin{equation*}
\begin{split}
	\lim\limits_{n\rightarrow \infty}n\Big(\frac{a_n}{a_{n+1}}-1\Big)&=\dfrac{2}{1-|\alpha|^2}\dfrac{(\bar{\alpha}-\alpha)i}{b}\\
	&=\dfrac{-4\cos t\sin^{2} t}{-2\sin^{2} t\cos t}\\
	&=2>1.
\end{split}
\end{equation*}
The last step follows from the fact that $r=\cos t$, so that $\alpha=\cos^2 t+i\cos t\sin t$, and thus $b=Re(b)=Re(\frac{-2i\alpha}{1-\alpha})=\frac{2\cos t}{\sin t}.$
Hence, by Rabbe's test, the series $\sum_{n\in \mathbb{N}}a_n $ is finite. 

Since $\phi^{-1}$  is also a parabolic automorphism fixing $1$ by similar arguments as above and using the fact that $(\phi^{-1})^{'}(w)=\frac{1}{\phi^{'}(\phi^{-1}(w))}$ for any $w \in \mathbb{T}$, the series $\sum_{n\in \mathbb{N}}a_{-n}$ converges. Hence the series $\sum_{n\in \mathbb{Z}}a_n$  converges.

\end{proof}

	Matache \cite{matachenum} provided the similar examples for discrete singular inner eigenfunctions of composition operators induced by hyperbolic automorphism fixing $1$ and $-1$, in terms of the Poisson kernel. In the above theorem, we have generalized it to case of all the non-elliptic automorphism.

The previous theorem does not give any information about the corresponding eigenvalue. We now present some observations concerning the eigenvalues corresponding to singular inner eigenfunctions for composition operators induced by automorphisms.
\begin{remark}\label{value}
Suppose $S_\mu \circ \phi=\lambda S_\mu$. Let $\alpha:=\phi^{-1}(0)$. Since $S_\mu(0)=e^{-\mu(\mathbb{T})}>0$, then we have
	$$1=\Big|\frac{S_\mu\circ \phi(\alpha)}{S_\mu(\alpha)}\Big|=\Big|\frac{S_\mu(0)}{S_\mu(\alpha)}\Big|=\frac{S_\mu(0)}{|S_\mu(\alpha)|}.$$
	Consequently, $$\lambda=\frac{S_\mu(0)}{S_\mu(\alpha)}=\frac{|S_\mu(\alpha)|}{S_\mu(\alpha)}.$$
\end{remark}

\begin{remark}\label{discreteiegen}
For any automorphism $\phi$, excluding irrational elliptic automorphism case, the set  of all eigenvalues of $C_\phi$ with singular inner functions (with the atom set of the measure as a single orbit) as the eigenfunctions will be either singleton or the whole unit circle. To see this, 
take $\alpha:=\phi^{-1}(0)$ and let  $a\in \mathbb{T}$ be arbitrary. For a fixed positive real number $a_0$, consider the {singular inner function $S_\mu$ defined as in Theorem \ref{disingular}}. Then, we can see that $S_\mu(\alpha)= e^{a_0(p+iq)}$, where $p+iq=\sum_{n \in\mathbb{Z}}b_n\frac{\alpha+\phi^{[n]}(a)}{\alpha-\phi^{[n]}(a)} \neq 0$. For any $n \in \mathbb{Z}$, $b_n $ is defined such that  $a_n=a_0b_n$, so that $b_n $ is independent of the value  $a_0$. Hence, the value of $q$ solely depends upon $a$ and $\alpha$. Thus, the eigenvalue corresponding to $S_{\mu}$ is  $\frac{|S_\mu(\alpha)|}{S_\mu(\alpha)}=e^{-iqa_0}$.
	\begin{enumerate}
		\item  If $q$ is zero, then $\lambda=1$ is the only eigenvalue with eigenfunction of the form given as above.
		\item If $q$ is non-zero, then for any unimodular constant $\lambda$, there exist an eigenfunction induced by the measure of the form given in Theorem \ref{disingular}. 
		Indeed, for a given $\lambda=e^{i\theta }\in \mathbb{T}$ with $\theta \in (0,2\pi]$, choose $$a_0=\begin{cases}
			\frac{-\theta}{q}&\text{ if } q<0\\
			\frac{-\theta+4\pi}{q}&\text{ if } q>0.\\
		\end{cases}$$ Then, by Remark \ref{value}
		and Theorem \ref{disingular}, we have $S_\mu \circ \phi=\lambda S_\mu$.
	\end{enumerate}
	
\end{remark}

In \cite[Page~246, 247]{matachenum}, it have been given that, for $C_\phi$ where $\phi$ is a hyperbolic automorphism fixing $\pm1$, the eigenvalue corresponding to the singular inner eigenfunction, where the atom set of the measure is a single orbit, cannot be 1. Here we provide a correction for the statement, that is, we prove that 1 is also a possible eigenvalue for the eigenfunction under consideration.
 For a hyperbolic automorphism fixing $p$, for some $p\in \mathbb{T}$, the following theorem guarantees that the value $q$ given in above remark does not vanish at least for some $a \in \mathbb{T}$.
\begin{theorem}
	Let $\phi$ be any hyperbolic automorphism  fixing $\pm p$, for some $p\in \mathbb{T}$. For any $\lambda \in \mathbb{T}$, there exists a singular inner  eigenfunction  $S_\mu$ for $C_\phi$, where $\mu$ is a discrete measure with atom set as a single orbit.
\end{theorem}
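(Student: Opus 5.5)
By Remark \ref{discreteiegen}, it suffices to find a single point $a\in\mathbb{T}$, not fixed by $\phi$, for which
$$q(a)=\mathrm{Im}\sum_{n\in\mathbb{Z}}b_n\frac{\alpha+\phi^{[n]}(a)}{\alpha-\phi^{[n]}(a)}$$
is nonzero. Here $\alpha=\phi^{-1}(0)$ and $b_n=a_n/a_0$ are the normalized weights from Theorem \ref{disingular}. Once $q(a)\neq 0$, part (2) of Remark \ref{discreteiegen} produces, for every $\lambda\in\mathbb{T}$, an $a_0>0$ such that $S_\mu\circ\phi=\lambda S_\mu$. Theorem \ref{disingular} guarantees that this $\mu$ is finite and that $S_\mu$ is an eigenfunction.

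\textbf{Step 1: normalize.} Conjugating by a rotation $z\mapsto \bar p z$ changes neither eigenvalues nor the discreteness or orbit structure of the measure, so we may take $p=1$ with fixed points $\pm1$. Such hyperbolic automorphisms form the one-parameter family
$$\phi(z)=\frac{z+r}{1+rz},\qquad r\in(-1,1)\setminus\{0\}.$$
Thus $\alpha=\phi^{-1}(0)=-r$ is real.

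\textbf{Step 2: compute $q(a)$.} For real $\alpha$ and $w\in\mathbb{T}$,
$$\mathrm{Im}\,\frac{\alpha+w}{\alpha-w}=\frac{2\alpha\,\mathrm{Im}\,w}{|\alpha-w|^2},$$
so
$$q(a)=2\alpha\sum_{n}b_n\frac{\mathrm{Im}\,\phi^{[n]}(a)}{|\alpha-\phi^{[n]}(a)|^2}.$$
Since $\phi$ has real coefficients, it maps the upper semicircle to itself: it preserves $\mathbb{T}$, preserves $\mathbb{R}$, and fixes $\pm1$. Hence for $a$ in the open upper half circle, every orbit point $\phi^{[n]}(a)$ has $\mathrm{Im}>0$. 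All terms are then positive, because $b_n>0$ and $\alpha\neq0$. So $q(a)\neq0$, and in fact $q$ has the sign of $\alpha$. This also settles that the orbit is infinite and avoids the fixed points, as Theorem \ref{disingular} requires.

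\textbf{Main obstacle.} The key step is the sign argument in Step 2, and it works only because $\alpha$ is real. The point to check carefully is that the $S_\mu$ used in Remark \ref{discreteiegen} follows the paper's sign convention, $S_\mu(z)=\exp\bigl(\sum a_i\frac{z+p_i}{z-p_i}\bigr)$. Under that convention, the phase of $S_\mu(\alpha)$ is $\pm a_0q$ up to sign, and a sign flip does not affect whether $q$ vanishes.

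We must also justify interchanging $\mathrm{Im}$ with the infinite sum. This holds because
$$\Bigl|\frac{\alpha+w}{\alpha-w}\Bigr|\le\frac{2}{1-|\alpha|},$$
and $\sum b_n<\infty$ by the convergence shown in Theorem \ref{disingular}.

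Finally, picking the specific $a_0$ from Remark \ref{discreteiegen}(2) yields every $\lambda\in\mathbb{T}$. This includes $\lambda=1$ (take $\theta=2\pi$), which corrects the claim in \cite{matachenum}.
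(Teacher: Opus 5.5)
Your proof is correct and is essentially the paper's argument: normalize to fixed points $\pm1$ so that $\alpha=\phi^{-1}(0)$ is real, then note that every term of $q$ has the same strict sign because the orbit of a point in the open upper semicircle stays there. The paper does this only for $a=i$, via an explicit formula for $\phi^{[n]}$, whereas your qualitative version works for every $a$ in the upper semicircle and needs no iterate formula. One small tightening: justify invariance of the upper semicircle either because $\phi$ is an orientation-preserving homeomorphism of $\mathbb{T}$ fixing $\pm1$, or by connectedness together with $\mathrm{Im}\,\phi(i)=\frac{1-r^2}{1+r^2}>0$. Preserving $\mathbb{T}$ and $\mathbb{R}$ and fixing $\pm1$ is not enough, since $z\mapsto 1/z$ has all three properties and swaps the semicircles.
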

\begin{proof}
Without loss of generality, let $p=1$  and $1$ is the Denjoy-Wolff point of $\phi$.	Using   \cite[Lemma 5.4.8]{texthardy}, we can compute that
	$$\phi^{[n]}(z)=\begin{cases}
		
		\frac{(d^n+1)z+(d^{n}-1)}{(d^n+1)+(d^{n}-1)z}&\text{ if } n\geq 1 \\
		\frac{(d^n+1)z-(d^{n}-1)}{(d^n+1)-(d^{n}-1)z}& \text{ if } n \leq -1,
		
	\end{cases}
	$$ for some $0<d<1$.
	Consider $a=i$ and for $n \in \mathbb{Z}$, let $b_n$ be defined as mentioned in Remark \ref{discreteiegen}.
	Then we have,
	$$q=Im\Big(\sum\limits_{n \in\mathbb{Z}}b_n\frac{\alpha+\phi^{[n]}(i)}{\alpha-\phi^{[n]}(i)}\Big)=-4\alpha\sum\limits_{n \in\mathbb{Z}}\frac{b_n d^n}{(d^{2n}+1)|\alpha-\phi^{[n]}(i)|^2}\neq 0,$$
	since each summand is strictly positive. By the Remark \ref{discreteiegen}, there exists a singular inner  eigenfunction  $S_\mu$ for $C_\phi$, where $\mu$ is a discrete measure with atom set as a single orbit.
\end{proof}

	   For an arbitrary hyperbolic automorphism, the set of eigenvalues corresponding to the discrete singular inner eigenfunction is the whole unit circle. This can be seen using the fact that any arbitrary hyperbolic automorphism can be turned into one which fixes $\pm 1$, via conjugation.
	   
	   The question regarding the singular inner eigenfunctions of $C_\phi$, arising from a singular continuous measure is still open.

		We conclude the section by providing a result regarding the Blaschke eigenfunction of $C_\phi$, where $\phi$ is a non-elliptic automorphism of $\mathbb{D}$.
	By \cite[Theorem 3.5]{matacheeigen} as well as \cite[Theorem 7.1]{modelsubspace}, any Blaschke eigenfunction will have its zero set as a disjoint union (possibly infinite)  of distinct orbits. In case of a hyperbolic automorphism $\phi$, Matache \cite[Remark 2]{matacheeigen} has given a class of  concrete examples for the Blaschke product  $B$, where  the zero set of $B$, denoted as $Z(B)$, is a countably infinite union of distinct orbits of $\phi$. Now, we establish the existence of such a Blaschke product for a general non-elliptic automorphism.
		\begin{proposition}
			Let $\phi$ be a non-elliptic automorphism of $\mathbb{D}$.  Then there exist Blaschke product B such that $Z(B)$ is an infinite union of distinct orbits and B is an
			eigenfunction of $C_\phi$.
		\end{proposition}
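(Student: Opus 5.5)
We construct $B$ as a product of orbit Blaschke products $B_j$ whose zero sets are the orbits $Orb_\phi(z_j)=\{\phi^{[n]}(z_j):n\in\mathbb{Z}\}$ of suitably chosen points $z_j\in\mathbb{D}$. Each $B_j$ will be normalized so that $B_j\circ\phi=\lambda_j B_j$, and the points $z_j$ will be chosen so that the whole product converges.

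\textbf{Step 1: a single orbit is a Blaschke sequence.} Fix $z\in\mathbb{D}$. By the Denjoy--Wolff theorem applied to $\phi$ and to $\phi^{-1}$, the points $\phi^{[n]}(z)$ tend to boundary points as $n\to\pm\infty$. Using the explicit iterates, we estimate $1-|\phi^{[n]}(z)|$:
\begin{itemize}
\item in the hyperbolic case it decays geometrically, like $|\phi'(p)|^{n}$;
\item in the parabolic case it is $O(1/n^2)$.
\end{itemize}
For the parabolic estimate we write $1-|w|^2=(1-|z|^2)\prod|\phi'|$-type factors, or compute directly from $\phi^{[n]}(z)=1-\frac{2i(1-z)}{2i+nb(1-z)}$. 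The $n^{-2}$ decay comes from $1-|\phi^{[n]}(z)|^2=(1-|z|^2)|(\phi^{[n]})'(z)|$ together with $(\phi^{[n]})'(z)=\frac{-4}{(2i+nb(1-z))^2}$. In either case $\sum_n(1-|\phi^{[n]}(z)|)<\infty$, so the orbit is a Blaschke sequence.

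\textbf{Step 2: uniform control over many orbits.} We want $\sum_j\sum_{n}(1-|\phi^{[n]}(z_j)|)<\infty$. Since $1-|\phi^{[n]}(z)|^2=(1-|z|^2)\,|(\phi^{[n]})'(z)|$, the key quantity is $\sigma(z):=\sum_n|(\phi^{[n]})'(z)|$. This is bounded for $z$ in a compact set $K\subset\mathbb{D}$, by the explicit formulas with constants uniform on $K$ (for example $|2i+nb(1-z)|\ge c|n|$ uniformly on $K$ for large $|n|$). We then choose distinct points $z_j\in K$ lying in different orbits, for instance on a small segment; only countably many points share an orbit with a given point, so infinitely many such $z_j$ exist. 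We put weights through multiplicities, or simply pick $z_j$ with $1-|z_j|^2$ summable, say $z_j=r_j$ with $r_j\to1$ along a radius avoiding the fixed points. In that case uniformity on a compact set fails.

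I expect this to be the main obstacle. A cleaner route is the conformal invariance of the sum $\sum_n (1-|\phi^{[n]}(z)|^2)$ along an orbit. Choose $z_j$ in a fundamental domain near the boundary, away from the fixed points, for example $z_j=t_j$ with $t_j$ in an arc of the circle not containing fixed points and radii $r_j\to1$ fast. The boundedness of $\sum_n|(\phi^{[n]})'|$ on compact subsets of $\overline{\mathbb{D}}$ minus the fixed points follows from Step 1's computation, and it lets us bound $\sum_n(1-|\phi^{[n]}(z_j)|^2)\le C(1-|z_j|^2)$. Choosing $\sum_j(1-r_j^2)<\infty$ then gives a Blaschke condition for the union.

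\textbf{Step 3: the eigenfunction property.} For one orbit $Z_j$, define $B_j=\prod_n c_n\frac{|w_n|}{w_n}\frac{w_n-z}{1-\bar w_n z}$ with $w_n=\phi^{[n]}(z_j)$ (if some $w_n=0$, use $z$ in place of that factor). The zero set of $B_j\circ\phi$ is $\phi^{-1}(Z_j)=Z_j$, with the same multiplicities. Since $\phi$ is an automorphism, $B_j\circ\phi$ is inner with the same zeros and no singular factor, because $S\circ\phi$ would be singular inner. Hence $B_j\circ\phi=\lambda_jB_j$ with $|\lambda_j|=1$. Equivalently, one can invoke \cite[Theorem 3.5]{matacheeigen} or \cite[Theorem 7.1]{modelsubspace} in the converse direction. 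Applying the same argument to $B=\prod_jB_j$, whose zero set $\bigcup_j Z_j$ is $\phi$-invariant and Blaschke by Step 2, gives $B\circ\phi=\lambda B$. Its zero set is an infinite union of distinct orbits, as required.
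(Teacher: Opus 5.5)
Your proposal is correct, and its core mechanism is the same as the paper's. You pick base points accumulating at a boundary point not fixed by $\phi$, with $\sum_j(1-|z_j|)<\infty$, and show that each orbit sum $\sum_n(1-|\phi^{[n]}(z_j)|^2)$ is at most a uniform constant times $1-|z_j|^2$.

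The paper does this only for parabolic $\phi$. It conjugates by $\Gamma(z)=\frac{1+z}{1-z}$ to the translation $s\mapsto s+i\beta$ and uses the explicit formula $1-|\phi^{[m]}(z)|^2=\frac{4u}{(m\beta+v)^2+(1+u)^2}$. It then takes $z_n=\Gamma^{-1}(u_n+id)$ with $\sum u_n<\infty$. These points tend to $\Gamma^{-1}(id)\neq 1$, which is exactly your configuration, and the orbits are automatically distinct because the translation preserves $\mathrm{Re}\,s$. The hyperbolic case is only cited from Matache. The fact that a Blaschke product with $\phi$-invariant zero set is an eigenfunction is left implicit.

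Your route differs in three ways:
\begin{itemize}
\item The identity $1-|\phi^{[n]}(z)|^2=(1-|z|^2)|(\phi^{[n]})'(z)|$, combined with a uniform bound on $\sum_n|(\phi^{[n]})'|$ on compact subsets of $\overline{\mathbb{D}}$ minus the fixed points, handles both non-elliptic types at once.
\item Distinct orbits come from a countability or fundamental-domain argument.
\item You prove the eigenfunction step explicitly: composition with an automorphism cannot create a singular factor, so $B\circ\phi$ is a Blaschke product with the same zeros, hence $B\circ\phi=\lambda B$.
\end{itemize}
The paper's explicit formula gives a fully computed example. Yours gives generality and a self-contained proof.

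When you write it up, make three fixes.
\begin{itemize}
\item Drop the false start in Step 2: points of a compact $K\subset\mathbb{D}$ cannot have summable $1-|z_j|^2$.
\item Correct $z_j=t_j$ to $z_j=r_jt_j$.
\item Add the uniformity that Step 1 does not give in the hyperbolic case, where you only asserted pointwise decay. After conjugating so that $\phi$ fixes $\pm1$, i.e.\ $\Gamma\circ\phi\circ\Gamma^{-1}(s)=\kappa s$, one has, with $s=\Gamma(z)$,
$|(\phi^{[n]})'(z)|=\frac{\kappa^n|s+1|^2}{|\kappa^n s+1|^2}\le|s+1|^2\min(\kappa^n,\kappa^{-n}|s|^{-2})$.
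This is summable in $n$ uniformly for $s$ in a compact subset of $\{\mathrm{Re}\,s\ge 0\}\setminus\{0\}$.
\end{itemize}
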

		\begin{proof} 
			Let $\phi$ be a parabolic automorphism of $\mathbb{D}$ . Without loss of generality, let $\phi$ fixes $1$. Let $\Gamma(z)=\frac{1+z}{1-z}$, for all $z \in \mathbb{D}$. Then there exist a non-zero real number $\beta$ such that $\Gamma \circ \phi \circ \Gamma^{-1}(s)=s+i\beta$ for all $s$ in the right half plane. 
			
			From the proof of the \cite[Theorem 4.1]{buerlingtype}, for an arbitrary $z \in \mathbb{D}$, we have 
			$$1-|\phi^{[m]}(z)|^2=\dfrac{4 u}{(m\beta+v)^2+(1+u)^2},$$
			for $m \in \mathbb{Z}$, where 
			$u$ and $v$ are the real and imaginary part of $\Gamma(z)$ respectively. For any $r \in [0,1)$, 
			$$1-r^2\leq 2(1-r)\leq 2(1-r^2).$$ Thus, for any sequence $\{z_n\}$ in $\mathbb{D}$, we have $\sum\limits_{n\in \mathbb{N}}(1-|z_n|)<\infty$ if and only if $\sum\limits_{n\in \mathbb{N}}(1-|z_n|^2)<\infty$. Thus, we can say that $\{\phi^{[m]}(z_n)\}_{m\in \mathbb{Z}, n \in \mathbb{N}}$ is a Blaschke sequence if and only if $$\sum\limits_{m\in \mathbb{Z}}\sum\limits_{n\in \mathbb{N}}\dfrac{4 u_n k_n}{(m\beta+v_n)^2 (1+u_n)^2}<\infty,$$
			where for each $n \in \mathbb{N}$, $k_n\in  \mathbb{N}$ and $u_n +iv_n=\Gamma(z_n)$ for $z_n\in \mathbb{D}$. Thus, choosing $z_n$ in such a way will provide us with a class of required examples. In particular, 
			choose $\{u_n+iv_n\}\subset\mathbb{H}^+$ such that $\sum u_n<\infty$ and $v_n\equiv d$, where $d$ is not a multiple of $\beta$. Let $z_n=\Gamma^{-1}(u_n+iv_n)$. Then we have
			$$\dfrac{4 u_n}{(m\beta+v_n)^2+(1+u_n)^2}<\dfrac{4 u_n}{(m\beta+d)^2}.$$ 
			
			For hyperbolic automorphism $\phi$, one can refer to 
			 \cite[Remark 2]{matacheeigen}.
		\end{proof}

\section{ Beurling and model invariant subspaces of composition operators}
In this section, we establish a relation between Beurling and model invariant subspaces of composition operators $ C_\phi$ on $H^2$, when the symbol $\phi$ is an automorphism. To obtain this connection, we begin with a result by Nordgren et al. \cite{COMP}, which provides a characterization of the common invariant subspaces of $C_\phi$ and $M_z^*$.

 If a subspace $M$ of a Hilbert space $H$ is invariant under a bounded linear operator $T$ of $H$, then we say that $M\in Lat(T)$.
We rewrite \cite[Theorem 4.1]{COMP} in the following manner.

\begin{theorem}\cite[Theorem 4.1]{COMP}\label{inv}
	Let $\phi$ be an inner function such that $\phi(0)\neq 0$. Then  
	$(z\theta H^2)^\perp\in Lat(C_\phi)$ if and only if 
 $\frac{\theta}{\theta\circ \phi}\in H^\infty$.

\end{theorem}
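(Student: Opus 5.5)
We plan to work with the model space description on the circle. For an inner function $u$, the standard identity is $(uH^2)^\perp = H^2\cap u\,\overline{zH^2}$, with functions identified with their radial boundary values. Applying it to $u=z\theta$, and using $z\theta\,\overline{z}\,\overline{H^2}=\theta\,\overline{H^2}$, we get
$$(z\theta H^2)^\perp=\{f\in H^2:\ \theta\,\overline{f}\in H^2 \text{ a.e. on } \mathbb{T}\},$$
meaning that $\theta\overline{f}$ coincides a.e. with the boundary function of some element of $H^2$. This turns invariance under $C_\phi$ into a statement about boundary functions.

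We also need one auxiliary fact about composition with an inner $\phi$. If $g\in H^2$, then $g\circ\phi\in H^2$, and its boundary function equals $\tilde g\circ\tilde\phi$ a.e. on $\mathbb{T}$. This holds because $\tilde\phi$ pushes Lebesgue measure forward to the harmonic measure at $\phi(0)$, which is mutually absolutely continuous with Lebesgue measure, together with the standard Poisson-integral argument. This boundary identification is the main technical point. Everything else is algebra on $\mathbb{T}$.

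\emph{Sufficiency.} Suppose $h:=\theta/(\theta\circ\phi)\in H^\infty$, and let $f\in(z\theta H^2)^\perp$. Then $\theta\overline{f}=\tilde g$ for some $g\in H^2$. Composing with $\tilde\phi$ and using the fact above gives $(\theta\circ\phi)\,\overline{f\circ\phi}=(g\circ\phi)^\sim\in H^2$. Multiplying by $h$ yields $\theta\,\overline{f\circ\phi}=h\,(g\circ\phi)\in H^2$ a.e. on $\mathbb{T}$. Hence $f\circ\phi\in(z\theta H^2)^\perp$.

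\emph{Necessity.} We test the invariance on $f=\theta$. This function lies in $(z\theta H^2)^\perp$, since $\langle\theta,z\theta g\rangle=\langle 1,zg\rangle=0$ because $|\theta|=1$ a.e. By invariance, $\theta\circ\phi\in(z\theta H^2)^\perp$, so $\theta\,\overline{\theta\circ\phi}=\tilde g$ a.e. for some $g\in H^2$. Since $\theta\circ\phi$ is inner, $|\tilde g|=1$ a.e., so $g\in H^2$ with bounded boundary values, and therefore $g\in H^\infty$. Moreover $g\,(\theta\circ\phi)$ and $\theta$ are $H^2$ functions with equal boundary values, so they agree on $\mathbb{D}$. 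Thus $\theta/(\theta\circ\phi)=g\in H^\infty$. The argument does not appear to use the hypothesis $\phi(0)\neq0$ in any essential way; we would only check that it does not affect the boundary-value step.
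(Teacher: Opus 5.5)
The paper gives no proof of this statement. It imports it, in reformulated form, from \cite{COMP}, so your argument is a self-contained substitute rather than a variant of an in-paper proof, and it is correct. The boundary description $(z\theta H^2)^\perp=\{f\in H^2:\theta\bar f\in H^2\}$ is right. Testing necessity on $f=\theta$ is exactly the right choice, and it even shows that the quotient is inner. The sufficiency computation is sound.

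The only analytic input is the one you isolate, and it is worth stating precisely. By L\"owner's lemma the pushforward of normalized Lebesgue measure $m$ under $\tilde\phi$ is $P_{\phi(0)}\,dm$, so $\tilde\phi^{-1}$ of an $m$-null set is $m$-null. This fact does three jobs: it justifies composing a.e.\ identities with $\tilde\phi$, it makes $\tilde g\circ\tilde\phi$ well defined, and it makes $\theta\circ\phi$ inner.

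For $(g\circ\phi)^\sim=\tilde g\circ\tilde\phi$, your ``Poisson-integral argument'' is most cleanly done by approximation. The identity is trivial for polynomials. If $p_n\to g$ in $H^2$, then $p_n\circ\phi\to g\circ\phi$ in $H^2$ by Littlewood's theorem, while $\int_{\mathbb{T}}|p_n\circ\tilde\phi-\tilde g\circ\tilde\phi|^2\,dm\le\frac{1+|\phi(0)|}{1-|\phi(0)|}\|p_n-g\|_2^2\to0$. So the two $L^2$ limits agree a.e.

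You are also right that $\phi(0)\neq0$ is never used: the equivalence holds for every inner $\phi$, and hence (1)$\Leftrightarrow$(2) of Theorem \ref{main1} holds for rotations too. The hypothesis presumably comes from the source's aim of showing that every common invariant subspace of $C_\phi$ and $M_z^*$ has the form $(z\theta H^2)^\perp$, and that aim does need it. For $\phi(z)=-z$, the span of $(1-\frac z2)^{-1}$ and $(1+\frac z2)^{-1}$ is invariant under both $C_\phi$ and $M_z^*$. Yet it contains no nonzero constants, while every $(z\theta H^2)^\perp$ contains them.
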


The above result yields the following observation.

\begin{theorem}\label{main1}
	Let $\phi$ be an automorphism of $\mathbb{D}$ such that $\phi(0)\neq0$ and let $\theta$ be an arbitrary inner function. Then the following statements are equivalent:
	
	\begin{enumerate}
		\item   $(z\theta H^2)^\perp \in Lat(C_{\phi}).$ 
		\item   $\theta H^2 \in Lat(C_{\phi^{-1}}).$
	
		\item   $(\theta \circ \phi^{[n]}) H^2 \in Lat(C_{\phi^{-1}})\text{ for some } n \in \mathbb{Z}.$
		\item   $(\theta \circ \phi^{[n]}) H^2 \in Lat(C_{\phi^{-1}})\text{ for all } n \in \mathbb{Z}.$
		\item   $(z(\theta\circ\phi^{[n]}) H^2)^\perp \in Lat(C_{\phi})\text{ for some } n \in \mathbb{Z}.$
		\item   $(z(\theta\circ\phi^{[n]}) H^2)^\perp \in Lat(C_{\phi})\text{ for all } n \in \mathbb{Z}.$
\end{enumerate}                                                 \end{theorem}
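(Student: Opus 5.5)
The plan is to reduce every statement to a single divisibility condition on the pair $(\theta,\phi)$ and then check that this condition is unchanged when $\theta$ is replaced by $\theta\circ\phi^{[n]}$. Two tools are needed. The first is Theorem \ref{inv}: since $\phi$ is an automorphism with $\phi(0)\neq 0$, it gives $(z\theta H^2)^\perp\in Lat(C_\phi)$ if and only if $\theta/(\theta\circ\phi)\in H^\infty$. The second is \cite[Theorem 2.3]{buerlingtype}, already used in the proof of Theorem \ref{splitting}, which gives $\theta H^2\in Lat(C_\psi)$ if and only if $(\theta\circ\psi)/\theta\in H^\infty$. Applying it with $\psi=\phi^{-1}$, statement (2) is equivalent to $(\theta\circ\phi^{-1})/\theta\in H^\infty$.

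For (1)$\Leftrightarrow$(2), compose with the automorphism $\phi^{-1}$. A function $g$ lies in $H^\infty$ if and only if $g\circ\phi^{-1}$ does. Taking $g=\theta/(\theta\circ\phi)$ gives $g\circ\phi^{-1}=(\theta\circ\phi^{-1})/\theta$, so the two conditions coincide.

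For the remaining items, note that $\theta_n:=\theta\circ\phi^{[n]}$ is again inner for every $n\in\mathbb{Z}$, since composing with an automorphism preserves boundedness and unimodular boundary values a.e. The equivalence (1)$\Leftrightarrow$(2), applied to $\theta_n$ in place of $\theta$, then gives (3)$\Leftrightarrow$(5) and (4)$\Leftrightarrow$(6). It therefore suffices to show that the condition $(\theta_n\circ\phi^{-1})/\theta_n\in H^\infty$ does not depend on $n$. This holds because
\[
\frac{\theta_n\circ\phi^{-1}}{\theta_n}=\Big(\frac{\theta\circ\phi^{-1}}{\theta}\Big)\circ\phi^{[n]},
\]
and precomposition with the automorphism $\phi^{[n]}$ preserves membership in $H^\infty$. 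Hence (2) holds if and only if (3) holds for some $n$, if and only if (4) holds for all $n$. Taking $n=0$ gives the trivial implications (4)$\Rightarrow$(2) and (6)$\Rightarrow$(1).

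The main obstacle is making sure the quotient characterization from \cite{buerlingtype} applies, in its stated direction, to the symbol $\phi^{-1}$, and that Theorem \ref{inv} is used only with the symbol $\phi$, which satisfies $\phi(0)\neq0$. Neither result is ever applied to $\phi^{-1}$ in the role of Theorem \ref{inv}, so the hypothesis $\phi(0)\neq 0$ is needed only for $\phi$ itself. Everything else is routine composition identities.
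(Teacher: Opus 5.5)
Your proposal is correct and follows the paper's own argument. Both translate (1) via Theorem \ref{inv} and (2) via the Bose et al.\ quotient criterion into $\theta/(\theta\circ\phi)\in H^\infty$ and $(\theta\circ\phi^{-1})/\theta\in H^\infty$, and link these by composing with $\phi^{-1}$. Both then move between $\theta$ and $\theta\circ\phi^{[n]}$ by precomposing the quotient with an automorphism. Your single identity with $\phi^{[n]}$, together with applying (1)$\Leftrightarrow$(2) to $\theta\circ\phi^{[n]}$, is a tidier packaging of the paper's step-by-step iteration from $\theta$ to $\theta\circ\phi$.
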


\begin{proof}

Let $\phi$ be an automorphism such that $\phi(0)\neq0$ and let $\theta$ be an inner function. Suppose that  $({z\theta H^2})^\perp \in Lat (C_\phi)$. This is equivalent to the equation $\theta =(\theta \circ \phi) g$ for some $g \in H^\infty$, which can be seen from Theorem \ref{inv}. By taking composition  of $\phi^{-1}$ on both sides of the equation, we have the equivalent condition $\frac{\theta  \circ \phi^{-1} }{\theta}\in H^\infty$. By \cite[Theorem 2.3]{buerlingtype}, this can be rewritten as 
$ \theta  H^2 \in Lat(C_{\phi^{-1}})$. This proves the equivalence of (1) and (2).

We observe that 	$\theta H^2 \in Lat (C_{\phi^{-1}})$ is equivalent to $\frac{\theta  \circ \phi^{-1} }{\theta}\in H^\infty$, which can in turn be reformulated as $\frac{\theta  }{\theta  \circ \phi}\in H^\infty$ and subsequently rewritten as $ (\theta \circ \phi) H^2 \in Lat(C_{\phi^{-1}})$.
By using the above equivalence iteratively, we easily obtain the equivalence of (2),(3) and (4). Again with the repetitive use of  \cite[Theorem 2.3]{buerlingtype}, one can get the equivalence of (1), (5) and (6).

\end{proof}

As an immediate consequence  of  Theorem \ref{main1} and \cite[Theorem 4.1]{javad}, we have the following result.
\begin{proposition}\label{beurling}
	Let $\phi$ be a non-identity, non-elliptic automorphism of $\mathbb{D}$. Then for any nonconstant inner function $\theta$, $C_{\phi}(\theta H^2) \subseteq\theta H^2$ if and only if either of the following holds:
	\begin{enumerate}
		\item $\theta$ is an eigenfunction of $C_\phi$.  
		\item $\theta= \psi \prod\limits_{n=0}^{\infty}\omega\circ\phi^{[-n]}$
	\end{enumerate}
	where $\psi$ is an (inner) eigenfunction of $C_\phi$ corresponding to the eigenvalue 1, and $\omega $ is a nonconstant inner function such that the product is pointwise convergent for all $z \in \mathbb{D}$.
\end{proposition}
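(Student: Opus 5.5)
The plan is to pass from the Beurling subspace $\theta H^2$ for $C_\phi$ to a model subspace for $C_{\phi^{-1}}$ via Theorem \ref{main1}, and then read off the answer from the Mashreghi--Sabhankh characterization \cite[Theorem 4.1]{javad}.

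\emph{Reduction.} Set $\varphi:=\phi^{-1}$. This is again a non-identity, non-elliptic automorphism, hence an inner function. Its only fixed points lie on $\mathbb{T}$, so $\varphi(0)\neq 0$. Applying Theorem \ref{main1} with $\varphi$ in place of $\phi$ (note $\varphi^{-1}=\phi$), the equivalence (1)$\Leftrightarrow$(2) gives
$$C_\phi(\theta H^2)\subseteq \theta H^2 \iff (z\theta H^2)^\perp\in Lat(C_{\phi^{-1}}).$$
So the problem becomes describing the inner functions $\theta$ for which the model space $K_{z\theta}=(z\theta H^2)^\perp$ is invariant under $C_{\phi^{-1}}$.

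\emph{Invoking Mashreghi--Sabhankh.} I expect \cite[Theorem 4.1]{javad} to say the following. For an inner symbol $\varphi$ with $\varphi(0)\neq0$ (not a rotation), the model space $K_{\Theta}$ with $\Theta=z\theta$ is $C_\varphi$-invariant if and only if one of two things holds. Either $\theta$ is an inner eigenfunction of $C_\varphi$. Or $\theta=\psi\prod_{n\ge0}\omega\circ\varphi^{[n]}$, where $\psi$ satisfies $\psi\circ\varphi=\psi$ and the product converges. Substituting $\varphi=\phi^{-1}$ gives $\varphi^{[n]}=\phi^{[-n]}$, which produces exactly the product $\prod_{n=0}^\infty\omega\circ\phi^{[-n]}$ in (2). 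It remains to transfer the eigenfunction conditions from $\phi^{-1}$ to $\phi$. If $\theta\circ\phi^{-1}=\lambda\theta$, then composing with $\phi$ gives $\theta=\lambda\,\theta\circ\phi$, so $\theta\circ\phi=\bar\lambda\theta$ with $\bar\lambda\in\mathbb{T}$. Thus eigenfunctions of $C_{\phi^{-1}}$ are exactly eigenfunctions of $C_\phi$, and eigenvalue $1$ is preserved. This turns the two alternatives into (1) and (2).

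\emph{Sanity check of sufficiency.} As a cross-check I will verify sufficiency directly using \cite[Theorem 2.3]{buerlingtype}. In case (1), $\theta\circ\phi/\theta$ is a unimodular constant. In case (2), write $\Pi=\prod_{n\ge0}\omega\circ\phi^{[-n]}$. Then
$$\Pi\circ\phi=\prod_{n\ge0}\omega\circ\phi^{[1-n]}=(\omega\circ\phi)\,\Pi,$$
so $\theta\circ\phi/\theta=\omega\circ\phi\in H^\infty$, using $\psi\circ\phi=\psi$. This telescoping relation is also how I would reconstruct necessity if the cited theorem were phrased differently. Suppose $\theta\circ\phi=\omega'\theta$ with $\omega'=\omega\circ\phi$ inner. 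Iterating backwards gives $\theta=\bigl(\prod_{n=0}^{N}\omega\circ\phi^{[-n]}\bigr)\,(\theta\circ\phi^{[-N-1]})$. Letting $N\to\infty$ along a subsequence, normal families give $\theta\circ\phi^{[-N-1]}\to\psi$, and $\psi$ is $C_\phi$-invariant.

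\emph{Main obstacle.} The delicate point is matching hypotheses and normalizations with \cite[Theorem 4.1]{javad}: their model space is $K_{z\theta}$ rather than $K_\theta$, and they require $\varphi(0)\neq0$. I also need to ensure that the limit $\psi$ is a genuine inner function (in particular not identically zero) and that the product converges pointwise. If those details are not supplied by the cited theorem, I would rely on it for necessity rather than redo the limit argument above.
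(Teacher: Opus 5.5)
Your reduction is the paper's own one-line proof. Theorem~\ref{main1} applied to $\phi^{-1}$ (legitimate, since $\phi^{-1}(0)\neq 0$) turns $C_\phi(\theta H^2)\subseteq\theta H^2$ into $(z\theta H^2)^\perp\in Lat(C_{\phi^{-1}})$. Then \cite[Theorem 4.1]{javad} is applied with $\varphi=\phi^{-1}$ and $\varphi^{[n]}=\phi^{[-n]}$. Finally, eigenfunctions of $C_{\phi^{-1}}$ and $C_\phi$ coincide, with eigenvalue $1$ preserved. Your direct check of sufficiency via $\Pi\circ\phi=(\omega\circ\phi)\Pi$ is also correct.

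\textbf{The gap is necessity.} You import it from the citation in the form you expect it to take, but it cannot hold in that form, because the ``only if'' direction is false as stated. A counterexample:
\begin{itemize}
\item Let $\phi$ be hyperbolic fixing $\pm 1$, let $q$ be the attracting fixed point of $\phi^{-1}$, and let $\omega$ be a single Blaschke factor normalized so that $\omega(q)=1$.
\item Since $\phi^{[-n]}\to q$ geometrically on compacta and $\omega$ is analytic near $q$, $\sum_n|1-\omega\circ\phi^{[-n]}|$ converges locally uniformly. So $\Pi=\prod_{n\ge 0}\omega\circ\phi^{[-n]}$ is a nonconstant inner function.
\item By the hyperbolic-case theorem of Section 3, there is an inner $\psi$ with $\psi\circ\phi=\lambda\psi$ and $\lambda\neq 1$.
\item Then $\theta=\psi\Pi$ satisfies $\theta\circ\phi=\lambda(\omega\circ\phi)\theta$. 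Hence $\theta H^2\in Lat(C_\phi)$, while $\theta$ is not an eigenfunction.
\item Suppose also $\theta=\psi'\prod_{n\ge0}\omega'\circ\phi^{[-n]}$ with $\psi'\circ\phi=\psi'$ and a pointwise convergent product. Then necessarily $\omega'=\theta/(\theta\circ\phi^{-1})=\lambda\omega$. Its partial products $\lambda^{N+1}\prod_{n=0}^{N}\omega\circ\phi^{[-n]}$ diverge wherever $\Pi\neq 0$.
\end{itemize}
So neither (1) nor (2) holds.

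Your fallback argument shows exactly where things break. The moduli $|\theta\circ\phi^{[-N]}|$ do increase to a limit. But normal families only give convergence of $\theta\circ\phi^{[-N]}$ up to unimodular constants. A subsequential limit $\psi$ therefore satisfies $\psi\circ\phi=\lambda\psi$ for some $\lambda\in\mathbb{T}$, not $\psi\circ\phi=\psi$. Likewise, the product converges only after renormalizing its partial products by unimodular constants.

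For parabolic $\phi$, allowing an arbitrary eigenvalue does not rescue the statement either. Write $\Gamma(z)=\frac{1+z}{1-z}$ and $\Gamma\circ\phi\circ\Gamma^{-1}(s)=s+i\beta$. Let $B$ be the Blaschke product with zeros $\{\phi^{[m]}(a)\}_{m\ge 0}$, chosen with $B(0)\neq 0$.
\begin{itemize}
\item $B\circ\phi/B$ is a single Blaschke factor, so $BH^2\in Lat(C_\phi)$.
\item A continuous branch of $\arg B(\phi^{[-N]}(0))$ grows like $c\log N$ with $c=2\,\mathrm{Re}\,\Gamma(a)/\beta\neq 0$. The reason is that the backward orbit of $0$ approaches the fixed point only at rate $1/N$.
\item Hence, for every $\kappa\in\mathbb{T}$, the forced partial products $\bar\kappa^{N+1}B(0)/B(\phi^{[-N-1]}(0))$ diverge.
\end{itemize}

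So you need to check the exact hypotheses and conclusion of \cite[Theorem 4.1]{javad} rather than assume them: the eigenvalue allowed for $\psi$, the normalization of the product, and whether parabolic maps are covered. The paper's one-line derivation has the same gap.
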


The occurrence of (1) in  Theorem \ref{beurling} is discussed in Section 3. For the case (2), we need to know what are all the inner functions $\omega$ such that  the product $\prod_{n=0}^{\infty}\omega(\phi^{[-n]}(z))$ converges  pointwise for all $z \in \mathbb{D}$. In the case of hyperbolic automorphism $\phi$, \cite[Lemma 3.1]{javad} established a sufficient condition for the inner function $\omega$ such that the product $\prod_{n=0}^{\infty}\omega(\phi^{[-n]}(z))$ converges for all $z \in \mathbb{D}$. For the general case the question is still open.

Now, we discuss some results that arise as a consequence of the Theorem \ref{main1}.
For an inner function $\theta$, we denote $K_{\theta}=(z\theta H^2)^\perp$.
\begin{remark}
	Let $\theta $ be an inner function with only a finite number of zeros in $\mathbb{D}$. It follows from \cite[Theorem 5.17]{article1} and Theorem \ref{main1} that $
	K_{\theta}\notin Lat(C_\phi)$ for any non-elliptic automorphism $\phi$.
\end{remark}

In the special case where $\theta$ is a Blaschke product, the invariance of $K_\theta$ under $C_\phi$ can be characterized as follows.

\begin{proposition}\label{bl}
	Let $\phi$ be an automorphism such that $\phi(0)\neq 0$. For a Blaschke product $B$, $C_\phi(K_B)\subseteq K_B$ if and only if $ {mult}_{B}(w)\geq {mult}_{B\circ \phi}(w)$ for all $w\in Z(B)$.                  
\end{proposition}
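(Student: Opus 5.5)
The plan is to reduce the statement to a divisibility condition via Theorem \ref{inv} and then read that condition off from the zeros.

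By Theorem \ref{inv}, since $K_B=(zBH^2)^\perp$ and $\phi(0)\neq 0$, we have $C_\phi(K_B)\subseteq K_B$ if and only if
$$\frac{B}{B\circ\phi}\in H^\infty .$$
Because $\phi$ is an automorphism, $B\circ\phi$ is again a Blaschke product, and its zero set is $\phi^{-1}(Z(B))$. The multiplicities are
$${mult}_{B\circ\phi}(w)={mult}_B(\phi(w)),$$
because $\phi$ is locally injective. So the quotient $B/(B\circ\phi)$ is a meromorphic function on $\mathbb{D}$ whose only possible poles are the zeros of $B\circ\phi$.

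Next I show that $B/(B\circ\phi)\in H^\infty$ is equivalent to the zero condition. This uses the standard divisibility fact for Blaschke products: a Blaschke product $B_2$ divides a Blaschke product $B_1$ in $H^\infty$ if and only if ${mult}_{B_1}(w)\geq {mult}_{B_2}(w)$ for every $w$.

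For necessity, suppose the quotient is bounded and holomorphic. Then at each $w$, the order of vanishing of $B$ must be at least the order of vanishing of $B\circ\phi$. In particular this holds for all $w\in Z(B)$.

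For sufficiency, note that the inequality is only assumed at points of $Z(B)$. So I must also rule out zeros of $B\circ\phi$ lying outside $Z(B)$, which I expect to be the main obstacle.
\begin{itemize}
\item The intended reading of the statement is that the condition forces $Z(B\circ\phi)\subseteq Z(B)$, i.e. $\phi^{-1}(Z(B))\subseteq Z(B)$.
\item I would therefore interpret the inequality as holding for all $w$, with both sides set to $0$ off the zero sets. Equivalently, I would handle the points $w\notin Z(B)$ with ${mult}_{B\circ\phi}(w)>0$ separately and check whether the hypothesis excludes them.
\end{itemize}

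Once $Z(B\circ\phi)\subseteq Z(B)$ holds with the multiplicity inequality, I write $B=(B\circ\phi)\,B_0$, where $B_0$ is the Blaschke product over the excess zeros. Here $B_0$ is a subproduct of the Blaschke sequence of $B$, so it converges. Unimodular constants are absorbed into $B_0$. The two sides agree up to a unimodular constant because both are Blaschke products with the same zeros counted with multiplicity.

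Hence $B/(B\circ\phi)=B_0\in H^\infty$, and Theorem \ref{inv} gives $C_\phi(K_B)\subseteq K_B$.
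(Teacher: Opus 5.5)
You have located exactly the right difficulty, but you leave it unresolved, and the check you propose cannot succeed. The hypothesis as stated does not exclude zeros of $B\circ\phi$ lying outside $Z(B)$; in fact the sufficiency direction is false as literally written. Take $B(z)=z$ and any automorphism $\phi$ with $\phi(0)\neq 0$. Then $Z(B)=\{0\}$ and $\mathrm{mult}_B(0)=1\geq 0=\mathrm{mult}_{B\circ\phi}(0)$, so the hypothesis holds. But $B/(B\circ\phi)=z/\phi(z)$ has a pole at $\phi^{-1}(0)\in\mathbb{D}$, so $K_B\notin Lat(C_\phi)$ by Theorem~\ref{inv}. Directly: $K_B=(z^2H^2)^\perp=\mathrm{span}\{1,z\}$, while $C_\phi z=\phi$ is not in this span because $\phi$ is not a rotation. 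More generally, a single Blaschke factor with zero $a$ and $\phi(a)\neq a$ is a counterexample.

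So your first bullet is not an ``intended reading''. The hypothesis must be changed to require the inequality for all $w\in Z(B\circ\phi)=\phi^{-1}(Z(B))$, equivalently for all $w\in\mathbb{D}$, and this is what forces $\phi^{-1}(Z(B))\subseteq Z(B)$. The word ``equivalently'' joining your two bullets is also misplaced. The first option changes the statement, while the second tries to derive the inclusion from the stated hypothesis, which is impossible.

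With the corrected hypothesis, your argument is complete and is essentially the paper's. Theorem~\ref{inv} reduces invariance to $B/(B\circ\phi)\in H^\infty$, necessity is read off from orders of vanishing, and sufficiency comes from factoring $B=(B\circ\phi)B_0$. The paper's converse has precisely the hole you flagged. It writes $B=B_2B_3$ with $B_2=B\circ\phi$ via the Riesz factorization, which presupposes $Z(B\circ\phi)\subseteq Z(B)$ with multiplicities, and the stated hypothesis does not supply this. Your proposal is therefore more careful than the printed proof. It should record the counterexample and prove the amended statement, rather than leave the issue as an interpretive choice.
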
 

\begin{proof}
	Let $C_\phi(K_B)\subseteq K_B$. Then $\frac{B}{B\circ\phi} \in H^{\infty}$. Thus, we see that $ {mult}_{B}(w)\geq {mult}_{B\circ \phi}(w)$ for all $w\in Z(B)$. 
	
	Conversely, let  $ {mult}_{B}(w)\geq {mult}_{B\circ \phi}(w)$ for all $w\in Z(B)$. Using \cite[Proposition 3.7]{article1}, $B_2:=B\circ\phi$ is a Blaschke product. Thus, by the Reisz factorization theorem \cite[Theorem 2.5]{duren}, $B=B_2B_3$, where $B_3$ is a Blaschke product. Therefore, the ratio $\frac{B}{B\circ \phi}=\frac{B_2B_3}{B_2}=B_3 \in H^\infty$. Hence, by Theorem \ref{inv}, $C_\phi(K_B)\subseteq K_B$.
\end{proof}
 In view of \cite[Proposition 3.7]{article1}, using the similar proof technique of Proposition \ref{bl}, we have the following corollary.
\begin{corollary}
	Let $\phi$ be an inner function with $\phi(0)\neq 0$. Then for any finite Blaschke product $B$, $C_\phi(K_B)\subseteq K_B$ if and only if $ {mult}_{B}(w)\geq {mult}_{B\circ \phi}(w)$ for all $w\in Z(B)$.                  
\end{corollary}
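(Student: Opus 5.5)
The plan is to copy the proof of Proposition \ref{bl} and replace the automorphism-specific input with \cite[Proposition 3.7]{article1}. We may assume that result says: for an inner $\phi$ and a finite Blaschke product $B$, the composition $B\circ\phi$ is again a Blaschke product (possibly infinite). The whole argument then reduces, via Theorem \ref{inv}, to checking when $\frac{B}{B\circ\phi}\in H^\infty$. Theorem \ref{inv} requires only that $\phi$ be inner with $\phi(0)\neq 0$, which is exactly our hypothesis.

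For necessity, assume $C_\phi(K_B)\subseteq K_B$. By Theorem \ref{inv}, $B=(B\circ\phi)g$ for some $g\in H^\infty$. Comparing orders of zeros at each point then gives $\mathrm{mult}_B(w)\ge \mathrm{mult}_{B\circ\phi}(w)$ for every $w\in\mathbb{D}$, and in particular on $Z(B)$.

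For sufficiency, first note that every zero of $B\circ\phi$ must lie in $Z(B)$. Otherwise some $w\notin Z(B)$ has $\mathrm{mult}_{B\circ\phi}(w)>0=\mathrm{mult}_B(w)$. This is the one place where the hypothesis, stated only on $Z(B)$, has to be read as covering all zeros of $B\circ\phi$. I would interpret the condition that way (equivalently, $Z(B\circ\phi)\subseteq Z(B)$ with the multiplicity inequality), as is implicit in Proposition \ref{bl}. In particular $Z(B\circ\phi)$ is finite, so the Blaschke product $B_2:=B\circ\phi$ from \cite[Proposition 3.7]{article1} is a finite Blaschke product, up to a unimodular constant. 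Its zero multiset is contained in that of $B$, so dividing out the factors gives $B=B_2B_3$ with $B_3$ a finite Blaschke product. Hence $\frac{B}{B\circ\phi}=B_3\in H^\infty$, and Theorem \ref{inv} yields $C_\phi(K_B)\subseteq K_B$.

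The main obstacle is the step showing that $B\circ\phi$ has no singular inner factor, because only then does the zero-multiplicity comparison control the whole quotient. A general inner $\phi$ might produce a singular factor (for example from boundary behaviour of $\phi$ near points mapped to the circle), and that factor would appear in the denominator of the quotient. This is precisely what \cite[Proposition 3.7]{article1} supplies for finite $B$, so I would invoke it directly. If a self-contained argument were needed, I would write $B\circ\phi=\prod_k \frac{\alpha_k-\phi}{1-\bar\alpha_k\phi}$. Each factor is a Frostman shift of $\phi$, hence a Blaschke product for all $\alpha_k$ outside a capacity-zero set. The cleaner route, though, is simply to cite the proposition.
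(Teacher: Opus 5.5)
Your route is the same as the paper's. Its entire proof is ``by \cite[Proposition 3.7]{article1}, repeat the proof of Proposition \ref{bl}''. Your necessity direction is fine. You are also right that the hypothesis must range over the zeros of $B\circ\phi$ and not only over $Z(B)$. Read literally, the statement already fails for automorphisms: take $B(z)=z$ and any automorphism $\phi$ with $\phi(0)\neq 0$. The condition at $w=0$ holds, but $C_\phi z=\phi\notin \mathrm{span}\{1,z\}=K_B$.

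The genuine gap is the step you flagged yourself, and it cannot be closed by a citation. The claim ``$B\circ\phi$ is a Blaschke product for every inner $\phi$ and finite Blaschke $B$'' is false. Take $\phi(z)=\exp\bigl(-\frac{1+z}{1-z}\bigr)$ and $B(z)=z$. Then $\phi(0)=e^{-1}\neq 0$ and $B\circ\phi=\phi$ has no zeros, so your corrected multiplicity condition holds at every $w\in\mathbb{D}$. Yet $z/\phi\notin H^\infty$, i.e.\ $K_B\notin Lat(C_\phi)$ by Theorem \ref{inv}.

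Restricting $\phi$ to Blaschke products does not help. Let $S$ be singular inner and $\tau_\alpha(w)=\frac{\alpha-w}{1-\bar\alpha w}$. Frostman's theorem makes $\phi:=\tau_\alpha\circ S$ a Blaschke product for all $\alpha$ off a capacity-zero set; choose such an $\alpha\neq S(0)$, so that $\phi(0)\neq 0$. Then $B=\tau_\alpha$ gives $B\circ\phi=S$, and again $B/(B\circ\phi)\notin H^\infty$.

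The same example shows why your Frostman fallback fails. It controls $\tau_\alpha\circ\phi$ only for $\alpha$ outside an exceptional set, while the zeros of $B$ are prescribed and may lie in that set; for instance, $\alpha=0$ is exceptional for every singular inner $\phi$.

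So sufficiency, and with it the statement as posed (which the paper's identical argument does not rescue), is false. What your argument actually proves is the version with the added hypothesis that $B\circ\phi$ is a Blaschke product. That hypothesis is automatic for automorphisms, which is Proposition \ref{bl}.

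Even with that hypothesis the generality is illusory. $B\circ\phi$ is then a finite Blaschke product dividing $B$, so $|\phi(z)|\to 1$ as $|z|\to 1$. Hence $\phi$ is a finite Blaschke product of some degree $d$, and $d\deg B\le \deg B$. For nonconstant $B$ this forces $\phi$ to be an automorphism with $B\circ\phi=\lambda B$.
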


The following proposition is the model-subspace analogue of Corollary \ref{corsplit}. It is obtained by combining Theorem \ref{main1} with the corresponding characterization, and is considerably more subtle and nontrivial.

\begin{corollary}\label{split2}
	Let $\phi$ be an automorphism of $\mathbb{D}$ and let $ \theta=BS_{\mu_a}S_{\mu_c}$ be its inner
	factorization.  Then
	$(\theta H^2)^\perp \in Lat( C_\phi)$  if and only if  $(zS_{\mu_a} H^2)^\perp \in Lat( C_\phi)$,
	$(zS_{\mu_c} H^2)^\perp \in Lat( C_\phi)$ and $(B H^2)^\perp \in Lat( C_\phi)$.
\end{corollary}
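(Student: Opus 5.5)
The plan is to move the model-space question to a Beurling-space question with Theorem \ref{main1}, split it there with Corollary \ref{corsplit}, and then move back factor by factor. I read the statement in the normalisation used throughout this section, $K_\theta=(z\theta H^2)^\perp$: the condition is $K_\theta\in Lat(C_\phi)$, and each factor condition is $K_{\cdot}\in Lat(C_\phi)$. The $z$ in front only shifts the zero at the origin. For a general $(\theta H^2)^\perp$ with $\theta(0)=0$, I will write $\theta=z\theta_0$ and apply the argument to $\theta_0$, whose Blaschke factor is $B/z$ and whose singular factors are unchanged. I expect this bookkeeping of the factor $z$ to be the only delicate point.

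\emph{Case $\phi(0)\neq 0$.} By Theorem \ref{main1}, (1)$\Leftrightarrow$(2), $K_\theta\in Lat(C_\phi)$ if and only if $\theta H^2\in Lat(C_{\phi^{-1}})$. Since $\phi^{-1}$ is again an automorphism, Corollary \ref{corsplit}(1) applies to $\phi^{-1}$ with the inner factorization $\theta=BS_{\mu_a}S_{\mu_c}$. It shows that this holds if and only if $BH^2$, $S_{\mu_a}H^2$ and $S_{\mu_c}H^2$ all belong to $Lat(C_{\phi^{-1}})$. Each of $B$, $S_{\mu_a}$, $S_{\mu_c}$ is itself an inner function, so Theorem \ref{main1} can be applied to each one separately (again with $\phi(0)\ne0$). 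It converts these three conditions back into $K_B, K_{S_{\mu_a}}, K_{S_{\mu_c}}\in Lat(C_\phi)$, which is the claim.

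\emph{Case $\phi(0)=0$.} Here $\phi$ is a rotation, so Theorem \ref{inv} and Theorem \ref{main1} are unavailable. Instead I use that $C_\phi$ is unitary on $H^2$ with $C_\phi^*=C_\phi^{-1}=C_{\phi^{-1}}$. For a closed subspace $M$, $M^\perp\in Lat(C_\phi)$ if and only if $M\in Lat(C_\phi^*)=Lat(C_{\phi^{-1}})$. I apply this with $M=z\theta H^2$. Since $z\circ\phi^{-1}$ is a unimodular multiple of $z$, $z\theta H^2\in Lat(C_{\phi^{-1}})$ if and only if $\theta H^2\in Lat(C_{\phi^{-1}})$. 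From there, Corollary \ref{corsplit} for $\phi^{-1}$ splits the condition, and the same unitary argument applied to each factor returns to $K_B$, $K_{S_{\mu_a}}$, $K_{S_{\mu_c}}$. Thus the reduction ``model space for $C_\phi$ $\leftrightarrow$ Beurling space for $C_{\phi^{-1}}$'' holds for every automorphism, and the corollary follows from Corollary \ref{corsplit}.

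The nontrivial input is entirely inside Corollary \ref{corsplit}, i.e.\ Theorem \ref{splitting}. That is where the uniqueness of the discrete/continuous decomposition of the measure of $(S_\mu\circ\phi)/S_\mu$ is used. So I expect no new obstacle beyond checking that each invocation of Theorem \ref{main1} is legitimate, which needs $\phi(0)\neq0$ and handling the rotation case separately as above, and tracking the factor $z$ correctly.
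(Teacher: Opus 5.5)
Your argument is correct and follows the paper's route: pass from the model space for $C_\phi$ to the Beurling space for $C_{\phi^{-1}}$ by Theorem~\ref{main1}, split with Corollary~\ref{corsplit}, and pass back factor by factor. Your separate unitary treatment of rotations fills a case that the paper's one-line justification skips, since Theorem~\ref{main1} assumes $\phi(0)\neq 0$; your $K_\theta=(z\theta H^2)^\perp$ reading is also the one the paper uses when it later applies this corollary, and the fully literal version with $\theta(0)\neq 0$ actually fails (e.g.\ for $\theta$ an atomic singular inner function at the repelling fixed point of a hyperbolic $\phi$).
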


We now present some results concerning the singular Beurling subspaces that are invariant under $C_\phi$, where $\phi$ is a non-elliptic automorphism.

The following equivalent condition for the invariance of a singular model subspace under composition operators induced by a non-elliptic automorphism is obtained from \cite[Corollary 2.15]{valentine} and Theorem \ref{main1}.
\begin{theorem}\label{pure}
	Let $\phi$ be a non-elliptic automorphism of $\mathbb{D}$ and $\mu$ be a discrete singular measure on $\mathbb{T}$. Then $C_{\phi}(zS_{\mu}H^2)^{\perp}\subseteq(zS_{\mu}H^2)^{\perp}$
	if and only if 
	\begin{enumerate}
		\item $\phi^{-1}(atom(\mu))\subseteq{atom(\mu)}$.
		\item $\dfrac{\mu\{\phi^{-1}(w)\}}{\mu\{w\}}\geq |({\phi^{-1}})^{'}(w)|$ for all $w \in atom{(\mu)}$.
	\end{enumerate}
\end{theorem}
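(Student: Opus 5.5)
We reduce the model-space statement to a Beurling-space statement via Theorem \ref{main1}, and then read off the measure conditions from Matache's characterization \cite[Corollary 2.15]{valentine}.

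\textbf{Step 1 (reduction).} If $\phi(0)\neq 0$, the equivalence of (1) and (2) in Theorem \ref{main1}, applied with $\theta=S_\mu$, gives
$$(zS_\mu H^2)^\perp\in Lat(C_\phi)\iff S_\mu H^2\in Lat(C_{\phi^{-1}}).$$
The condition $\phi(0)\neq 0$ always holds here. A non-elliptic automorphism has no fixed point in $\mathbb{D}$, so $\phi(0)=0$ is impossible unless $\phi$ is the identity. The identity case is trivial: there both (1) and (2) hold, since $(\phi^{-1})'\equiv 1$ on $\mathbb{T}$.

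\textbf{Step 2 (apply Matache).} The map $\psi:=\phi^{-1}$ is again a non-elliptic automorphism. By \cite[Corollary 2.15]{valentine}, for a discrete $\mu$ the invariance $S_\mu H^2\in Lat(C_\psi)$ holds if and only if two conditions hold:
\begin{enumerate}
\item $\psi(atom(\mu))\subseteq atom(\mu)$, or equivalently $\psi$ maps atoms to atoms in the form used there;
\item $\mu(\{\psi(w)\})\ge |\psi'(w)|\,\mu(\{w\})$ for $w\in atom(\mu)$.
\end{enumerate}
The measure-theoretic content behind this is \cite[Lemma 4]{jones}. It gives $S_\mu\circ\psi\sim S_\nu$, where
$$\nu(\{w\})=\frac{1-|\psi(0)|^2}{|\psi(w)-\psi(0)|^2}\,\mu(\{\psi(w)\})=|\psi'(w)|^{-1}\mu(\{\psi(w)\}),$$
and using the boundary identity $|\psi'(w)|=\frac{1-|\psi(0)|^2}{|\psi(w)-\psi(0)|^2}$ for automorphisms. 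Then, by \cite[Theorem 2.3]{buerlingtype}, the inclusion $C_\psi(S_\mu H^2)\subseteq S_\mu H^2$ is equivalent to $S_\nu/S_\mu\in H^\infty$, that is, to $\nu\ge\mu$. Evaluating $\nu\ge\mu$ atom by atom gives the support condition and the ratio inequality.

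\textbf{Step 3 (rewrite in the statement's form).} It remains to translate these conditions into the notation of the statement. Substituting $\psi=\phi^{-1}$, the support condition becomes $\phi^{-1}(atom(\mu))\subseteq atom(\mu)$, which is (1). For the inequality, we need to check which point the derivative is evaluated at. The statement evaluates $(\phi^{-1})'$ at $w$ and compares $\mu\{\phi^{-1}(w)\}$ with $\mu\{w\}$. This matches Matache's condition $\mu(\{\psi(w)\})\ge|\psi'(w)|\mu(\{w\})$ with $\psi=\phi^{-1}$, which gives (2).

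\textbf{Main obstacle.} The main difficulty is this bookkeeping. We must make sure that the version of \cite[Corollary 2.15]{valentine} (or, equivalently, Jones's formula) is oriented correctly: whether the measure of $S_\mu\circ\psi$ is expressed through $\psi$ or through $\psi^{-1}$, and where the derivative is evaluated.

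To settle this, I would recompute directly from the Poisson kernel for a single atom. Write $S_{\delta_p}\circ\psi$ in the form $\exp\left(c\,\frac{z+q}{z-q}\right)$ and identify both the new atom $q$ and the new mass $c$. The single atom $p$ of $S_{\delta_p}$ is sent to $q=\psi^{-1}(p)$, and its mass is multiplied by $|(\psi^{-1})'(p)|$. Checking this carefully confirms exactly the orientation in conditions (1) and (2).
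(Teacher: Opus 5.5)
Your proposal is correct and follows the paper's route: the paper obtains this statement by combining the equivalence (1)$\Leftrightarrow$(2) of Theorem \ref{main1} with Matache's \cite[Corollary 2.15]{valentine} applied to $\phi^{-1}$, and your single-atom check settles the orientation correctly (atom $p\mapsto\psi^{-1}(p)$, mass multiplied by $|(\psi^{-1})'(p)|$). One small slip: the boundary identity for automorphisms is $\frac{1-|\psi(0)|^2}{|\psi(w)-\psi(0)|^2}=|\psi'(w)|^{-1}$, not $|\psi'(w)|$; your displayed formula for $\nu(\{w\})$ already uses the correct version, so nothing downstream changes.
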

Using the above proposition, we now make some observations concerning the singular inner functions with finitely many atoms.

\begin{corollary}
	Let $S(z)=\exp(\alpha\frac{z+p}{z-p})$ be an atomic singular inner function, where  $\alpha>0$ and $p \in \mathbb{T}$.
	\begin{enumerate}
		\item	Let $\phi$  be a parabolic automorphism. Then $C_\phi(zSH^2)^\perp\subseteq (zSH^2)^\perp$ if and only if
		$p$ is the fixed point of $\phi$.
		\item  Let $\phi$  be a hyperbolic automorphism. Then $C_\phi(zSH^2)^\perp\subseteq (zSH^2)^\perp$ if and only if
		$p$ the fixed point of $\phi$ and $p$ is not the Denjoy-Wolff point.
\end{enumerate}
	
\end{corollary}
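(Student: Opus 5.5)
We apply Theorem \ref{pure} with $\mu=\alpha\delta_p$, so that $atom(\mu)=\{p\}$. Condition (1) of Theorem \ref{pure} reads $\phi^{-1}(\{p\})\subseteq\{p\}$, i.e. $\phi^{-1}(p)=p$, which holds exactly when $p$ is a fixed point of $\phi$ (and hence of $\phi^{-1}$). Once $p$ is fixed, condition (2) becomes
$$\frac{\mu(\{\phi^{-1}(p)\})}{\mu(\{p\})}=1\geq |(\phi^{-1})'(p)|=\frac{1}{|\phi'(p)|},$$
that is, $|\phi'(p)|\geq 1$. So in both parts the invariance is equivalent to: $\phi(p)=p$ and $|\phi'(p)|\geq 1$. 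The rest is to read off the angular derivative at fixed points of non-elliptic automorphisms.

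For part (1), $\phi$ is parabolic. Its unique fixed point $p\in\mathbb{T}$ is the Denjoy–Wolff point, and for a parabolic automorphism $\phi'(p)=1$. This can be checked from the explicit form $\phi^{[n]}(z)=1-\frac{2i(1-z)}{2i+nb(1-z)}$ (after rotating $p$ to $1$) used in the proof of Theorem \ref{disingular}: with $n=1$, differentiating at $z=1$ gives $\phi'(1)=1$. Hence condition (2) holds automatically whenever $p$ is the fixed point, and (1) follows.

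For part (2), $\phi$ is hyperbolic with fixed points $p_1$ (Denjoy–Wolff) and $p_2$ on $\mathbb{T}$. Here $0<\phi'(p_1)<1$ and $\phi'(p_2)=1/\phi'(p_1)>1$. This was already used in the proof of Theorem \ref{disingular}, where $|\phi'(p)|<1$ at the Denjoy–Wolff point and $1/|\phi'(q)|<1$ at the other fixed point; it can also be seen from the conjugated form $s\mapsto ks$ on the right half-plane. Therefore condition (2) holds at $p_2$ and fails at $p_1$, which gives the statement.

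The only point needing care is the derivative values at the fixed points, particularly $|\phi'(p)|=1$ in the parabolic case. I would verify these directly from the normal forms in \cite[Lemma 5.4.4, 5.4.8]{texthardy} rather than cite them. The remaining steps are bookkeeping with Theorem \ref{pure}.
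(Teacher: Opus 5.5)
Your proof is correct and follows the paper's argument. You specialize Theorem \ref{pure} to $\mu=\alpha\delta_p$, read off $\phi(p)=p$ from condition (1) and $|\phi'(p)|\geq 1$ from condition (2), and then use the derivative at the boundary fixed points. Your explicit checks, that $\phi'(p)=1$ in the parabolic case and $\phi'(p_2)=1/\phi'(p_1)>1$ in the hyperbolic case, spell out what the paper leaves implicit in ``the converse part directly follows from Theorem \ref{pure}''.
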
 
\begin{proof}
	
Proof of (1):	Let $\phi$ be a parabolic automorphism.
	If $C_\phi(zSH^2)^\perp\subseteq (zSH^2)^\perp$ , then by the first condition of Theorem \ref{pure}, we have $p \in \{\phi(p)\}$ and thus $p$ is the fixed point of $\phi$. The converse part directly follows from Theorem \ref{pure}.
	 
Proof of (2):	Let $\phi$ be a hyperbolic automorphism. Assume that $C_\phi(zSH^2)^\perp\subseteq (zSH^2)^\perp$. Then condition (1) in Theorem \ref{pure} implies $\phi(p)=p$. We can see that, for any $w \in \mathbb{T}$, $(\phi^{-1})^{'}(w)=\frac{1}{\phi^{'}(\phi^{-1}(w))}$. By condition (2) in Theorem \ref{pure}, we have 
$|\phi^{'}(p)|\geq 1$, which is only possible if $p$ is not the  Denjoy–Wolff point. The converse part directly follows from Theorem \ref{pure}.

\end{proof}

\begin{proposition}
		Let $\mu$ be  a discrete measure with exactly 2 atoms and let $\theta=BS_\mu$ be the inner factorization of the function $\theta$. Then $\theta H^2$ and $(z\theta H^2)^\perp$ are not in $Lat (C_\phi)$ for any parabolic automorphism $\phi$.
\end{proposition}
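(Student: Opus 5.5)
The plan is to reduce both assertions to a statement about the discrete singular factor $S_\mu$, and then use the orbit structure of a parabolic automorphism.

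First, the Beurling subspace. Since $\mu$ is discrete, the inner factorization of $\theta$ is $\theta=BS_{\mu}$ with $\mu_a=\mu$ and $\mu_c=0$. By Corollary \ref{corsplit}(1), $C_\phi(\theta H^2)\subseteq\theta H^2$ forces $C_\phi(S_\mu H^2)\subseteq S_\mu H^2$. So it suffices to show that $S_\mu H^2\notin Lat(C_\phi)$ for every parabolic automorphism $\phi$.

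For this I would invoke the characterization of invariant discrete singular Beurling subspaces in \cite[Corollary 2.15]{valentine}. It is the Beurling counterpart of Theorem \ref{pure}, so one of its necessary conditions is $\phi(atom(\mu))\subseteq atom(\mu)$. I expect checking that this is indeed the form of the cited condition to be the only delicate point. If it is phrased differently, I would derive the inclusion directly from $\frac{S_\mu\circ\phi}{S_\mu}\in H^\infty$. As in the proof of Theorem \ref{splitting}, $S_\mu\circ\phi\sim S_{v_1}$ with $supp(v_1)=\phi^{-1}(atom(\mu))$, and divisibility $S_\mu\mid S_{v_1}$ forces $\mu\le v_1$. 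Hence $atom(\mu)\subseteq\phi^{-1}(atom(\mu))$, which is the desired inclusion.

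Now write $atom(\mu)=\{p_1,p_2\}$ with $p_1\neq p_2$. Since $\phi$ is injective on $\mathbb{T}$, the inclusion $\phi(\{p_1,p_2\})\subseteq\{p_1,p_2\}$ means $\phi$ permutes the two points. There are two cases.
\begin{enumerate}
\item If $\phi$ fixes both $p_1$ and $p_2$, then $\phi$ has two fixed points on $\mathbb{T}$. This is impossible, since a parabolic automorphism has exactly one fixed point on $\mathbb{T}$.
\item If $\phi(p_1)=p_2$ and $\phi(p_2)=p_1$, then $p_1$ is not fixed, yet $\phi^{[2]}(p_1)=p_1=\phi^{[0]}(p_1)$. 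This contradicts the fact, used in the proof of Theorem \ref{disingular} (from \cite[Lemma 5.4.4]{texthardy}), that for a non-elliptic automorphism the iterates $\phi^{[n]}(a)$, $n\in\mathbb{Z}$, are pairwise distinct whenever $\phi(a)\neq a$.
\end{enumerate}
Alternatively, in the second case, $\phi^{[2]}$ is again parabolic with the same unique fixed point, but it fixes both $p_1$ and $p_2$, a contradiction. Either way, $\theta H^2\notin Lat(C_\phi)$.

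Second, the model space. A parabolic automorphism has no fixed point in $\mathbb{D}$, so $\phi(0)\neq0$ and Theorem \ref{main1} applies. It gives $(z\theta H^2)^\perp\in Lat(C_\phi)$ if and only if $\theta H^2\in Lat(C_{\phi^{-1}})$. Since $\phi^{-1}$ is also a parabolic automorphism, the first part applied to $\phi^{-1}$ shows that the latter fails. This completes the argument.
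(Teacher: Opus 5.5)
Your proof is correct. The Beurling half follows the paper's argument: you reduce to $S_\mu H^2\in Lat(C_\phi)$ via Corollary~\ref{corsplit}, obtain $\phi(\{p_1,p_2\})\subseteq\{p_1,p_2\}$ from Matache's Corollary 2.15, and rule out both the fixing case and the swapping case. Your fallback derivation of that inclusion is also sound. The quotient $(S_\mu\circ\phi)/S_\mu$ is a zero-free inner function, hence a constant times a singular inner function, so $\mu\le v_1$. Read ``$supp(v_1)$'' as the atom set of $v_1$, since $v_1(\{w\})$ is a positive multiple of $\mu(\{\phi(w)\})$.

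For the model-space half you take a different and shorter route. Since $\phi(0)\neq 0$, Theorem~\ref{main1} turns $(z\theta H^2)^\perp\in Lat(C_\phi)$ into $\theta H^2\in Lat(C_{\phi^{-1}})$. The first half, applied to the parabolic automorphism $\phi^{-1}$, then finishes the argument. The paper instead passes to $(zS_\mu H^2)^\perp$ through Corollary~\ref{split2}. It then uses Theorem~\ref{pure} to get $\phi^{-1}(\{a,b\})\subseteq\{a,b\}$ and repeats the two-case argument.

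Theorem~\ref{pure} is itself obtained from Matache's corollary combined with Theorem~\ref{main1}, so the underlying ingredients are the same. Your version avoids Corollary~\ref{split2}, which the paper only sketches, and makes explicit the symmetry between the Beurling and model-space assertions. The paper's version has the advantage of exhibiting the atom condition on the model-space side directly.
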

\begin{proof}

	Let $atom(\mu)=\{a,b\}$, and let $\phi$ be any parabolic automorphism.  Suppose $\theta H^2=BS_\mu H^2\in Lat (C_\phi)$, which  implies $S_\mu H^2 \in Lat (C_\phi)$. Then by  \cite[Corollary 2.15]{valentine}, $\phi\{a,b\}\subset\{a,b\}$. Since $\phi$ is an automorphism, either $\phi$ fixes $a$ and $b$ or $\phi$ swaps $a$ and $b$. In either cases, $\phi$ or $\phi^2$ fixes $a$ and $b$, which is not possible for any parabolic automorphism.  
	
	Similarly, assume that $(z\theta H^2)^\perp\in Lat (C_\phi)$. Then by Corollary \ref{split2}, we get 
	$(zS_\mu H^2)^\perp\in Lat (C_\phi)$ and by Proposition \ref{pure}, we get  $\phi^{(-1)}\{a,b\}\subset\{a,b\}$. Using  similar argument as above, we will get a contradiction.
	
\end{proof}
\begin{proposition}
	Let $\mu$ be  a discrete measure  whose atom set is finite with at least 3 elements and let $\theta=BS_\mu$. Then $\theta H^2$ and $(z\theta H^2)^\perp$ are not in $Lat (C_\phi)$ for any non-elliptic automorphism $\phi$.
\end{proposition}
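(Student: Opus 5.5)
We argue by contradiction, following the two-atom proposition. Let $atom(\mu)=A$ be finite with $|A|=m\geq 3$, and let $\phi$ be a non-elliptic automorphism.

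\textbf{The Beurling subspace.} Suppose $\theta H^2=BS_\mu H^2\in Lat(C_\phi)$. By Corollary \ref{corsplit} (a discrete $\mu$ has $\mu_c=0$), this forces $S_\mu H^2\in Lat(C_\phi)$. Then \cite[Corollary 2.15]{valentine} gives $\phi(A)\subseteq A$. Since $\phi$ is injective and $A$ is finite, $\phi$ restricts to a permutation of $A$. Hence some iterate $\phi^{[k]}$, with $k\le m!$ (for instance $k$ the order of this permutation), fixes every point of $A$.

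\textbf{The model space.} Suppose instead $(z\theta H^2)^\perp\in Lat(C_\phi)$. By Corollary \ref{split2}, $(zS_\mu H^2)^\perp\in Lat(C_\phi)$. Theorem \ref{pure}(1) then gives $\phi^{-1}(A)\subseteq A$. By the same finiteness argument, $\phi^{-1}$ permutes $A$, so some iterate $\phi^{[-k]}$, and therefore $\phi^{[k]}$, fixes every point of $A$. Alternatively, Theorem \ref{main1} converts this case into $\theta H^2\in Lat(C_{\phi^{-1}})$, with $\phi^{-1}$ again non-elliptic, and the first case applies; this route needs $\phi(0)\neq 0$, which holds because a non-elliptic automorphism has no fixed point in $\mathbb{D}$.

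\textbf{The contradiction.} It remains to show that no iterate $\phi^{[k]}$, $k\ge1$, of a non-elliptic automorphism can fix three distinct points of $\mathbb{T}$.

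First, $\phi^{[k]}$ is not the identity. If it were, the iterates $\phi^{[n]}$ would be periodic, contradicting the Denjoy--Wolff theorem: the iterates converge to a boundary point, whereas a periodic sequence of automorphisms cannot converge to a constant.

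Second, a non-identity Möbius transformation has at most two fixed points in $\widehat{\mathbb{C}}$. Fixing $|A|\ge 3$ points is therefore impossible, which is the desired contradiction.

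The step that needs the most care is confirming that the iterate is not the identity, and this is settled by the Denjoy--Wolff argument above. Alternatively, one can use the explicit iterate formulas from \cite[Lemmas 5.4.4, 5.4.8]{texthardy}: these show that iterates of a hyperbolic or parabolic automorphism remain hyperbolic or parabolic respectively, so each has at most two fixed points on $\mathbb{T}$.
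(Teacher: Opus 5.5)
Your proof is correct and follows the same route as the paper. You reduce to $S_\mu$ via Corollary \ref{corsplit} (resp.\ Corollary \ref{split2} or Theorem \ref{main1}), use \cite[Corollary 2.15]{valentine} or Theorem \ref{pure} to see that $\phi$ (resp.\ $\phi^{-1}$) permutes the finite atom set, and conclude that a non-identity iterate of $\phi$ would fix at least three points. Your version is slightly more careful than the paper's in three places: the paper asserts that $\phi^{[n]}$ fixes all $n$ atoms, which fails for a general permutation, and you correctly use the order of the permutation instead; you also justify explicitly that the iterate is not the identity and that a non-identity M\"obius map has at most two fixed points.
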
                                                  \begin{proof}
	Let $\theta=BS_\mu$ be an inner function such that $atom(\mu)=\{a_1, a_2, ,…,a_n\}$ for some $a_i \in \mathbb{T}$ and $n \in \mathbb{N}$ and let $\phi$ be a non-elliptic automorphism of $\mathbb{D}$. Assume that, $\theta H^2\in Lat (C_\phi)$. Since  $\phi$ is bijective on $atom(\mu)$, by \cite[Corollary 2.15]{valentine}, we have $\phi(atom(\mu))=atom(\mu)$. Thus, $\phi^{[n]}\in Aut(\mathbb{D})$ fixes $\{a_1, a_2, ,…,a_n\}$ and for $n\geq3$, this contradicts the fact that $\phi$ is a non-elliptic automorphism. Hence, $\theta H^2\notin Lat (C_\phi)$. 
	
	Similarly, assume that $(z\theta H^2)^\perp\in Lat (C_\phi)$. Then by Corollary \ref{split2}, we get 
	$(zS_\mu H^2)^\perp\in Lat (C_\phi)$ and by Proposition \ref{pure},  we get
	$\phi^{(-1)}(atom(\mu))= (atom(\mu))$. Using similar argument as above, we get a contradiction. Hence, $(z\theta H^2)^\perp \notin Lat (C_\phi)$.
\end{proof}
We conclude the article with a result concerning the minimal invariant subspaces of the composition operator.
Let $T$ be a bounded linear operator on a Hilbert space $H$. A subspace $M \in Lat(T)$ is said to be minimal invariant if there does not exist a subspace $N \in Lat(T)$ such that ${0}\neq N\subsetneq M$.

For $a \in \mathbb{D}$, let $\phi_a$ be a holomorphic self-map of $\mathbb{D}$ defined as $\phi_a(z)=az+1-a$ for $z \in \mathbb{D}$. Carmo and Noor \cite[Corollary 16]{noor} proved that no Beurling subspace invariant under $C_{\phi_a}$ is minimal. Here we give a simpler proof for the statement and extend it to an arbitrary holomorphic self-map of $\mathbb{D}$.

\begin{proposition}
	Let $\phi$ be a holomorphic self-map of $\mathbb{D}$ and let $\theta H^2 \in Lat(C_\phi)$. Then $\theta^nH^2\in Lat(C_{\phi})$ for $n \in \mathbb{N}$. Hence, none of the Beurling subspaces is minimal invariant for the composition operator $C_\phi$.
\end{proposition}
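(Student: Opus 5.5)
The plan is to use the characterization of invariant Beurling subspaces from \cite[Theorem 2.3]{buerlingtype}: for an inner function $\theta$ and a holomorphic self-map $\phi$, $\theta H^2\in Lat(C_\phi)$ if and only if $\frac{\theta\circ\phi}{\theta}\in H^\infty$. I take this in the generality of arbitrary holomorphic self-maps $\phi$, since that is how it is used in the introduction. The whole proposition should then follow from the multiplicativity of this quotient.

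First, assume $\theta H^2\in Lat(C_\phi)$. Then $g:=\frac{\theta\circ\phi}{\theta}\in H^\infty$. For $n\in\mathbb{N}$, $\theta^n$ is again inner, and
$$\frac{\theta^n\circ\phi}{\theta^n}=\Big(\frac{\theta\circ\phi}{\theta}\Big)^n=g^n\in H^\infty .$$
Applying \cite[Theorem 2.3]{buerlingtype} in the reverse direction gives $\theta^nH^2\in Lat(C_\phi)$. A more elementary route avoids the cited theorem: for $f\in H^2$, write $f\circ\phi=\theta\circ\phi\cdot (h\circ\phi)$ with $h\in H^2$, and use $\theta\circ\phi=\theta g$ to get $C_\phi(\theta^n h)=\theta^n g^n (h\circ\phi)\in\theta^nH^2$. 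Here one notes that $g=\frac{\theta\circ\phi}{\theta}$ lies in $H^\infty$ because $\theta\circ\phi\in\theta H^2$ (take $f=\theta\cdot 1$). So the first part needs only the hypothesis and Littlewood subordination.

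For non-minimality, take a nonzero invariant Beurling subspace $\theta H^2$. If $\theta$ is a unimodular constant, then $\theta H^2=H^2$, and $z H^2$ is a proper nonzero invariant subspace only when $\phi(0)=0$; otherwise one would use another Beurling subspace such as $(\phi-\phi(0))$-type factors. The natural reading, though, is that $\theta$ is nonconstant inner, and I will treat that case. Then $\theta^2H^2\in Lat(C_\phi)$ by the first part, and $\{0\}\neq\theta^2H^2\subseteq\theta H^2$. The inclusion is strict because $\theta\notin\theta^2H^2$: otherwise $1=\theta h$ with $h\in H^2$, so $1/\theta\in H^2$. Since $|\theta|=1$ a.e. on $\mathbb{T}$, this would force $1/\theta$ to be inner as well, making $\theta$ constant. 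Hence $\theta H^2$ is not minimal.

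The main obstacle is bookkeeping rather than analysis: I need to justify that $\frac{\theta\circ\phi}{\theta}$ is bounded, either via the cited theorem or directly by the inner–outer argument (a quotient of an $H^2$ function by an inner function that lies in $H^2$ and has unimodular-bounded boundary values is in $H^\infty$). I also need to handle the trivial constant $\theta$ case, which I will exclude as the interpretation of a genuine Beurling subspace.
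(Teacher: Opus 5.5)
Your proposal is correct and follows the paper's argument: the identity $\frac{\theta^n\circ\phi}{\theta^n}=\bigl(\frac{\theta\circ\phi}{\theta}\bigr)^n$ together with \cite[Theorem 2.3]{buerlingtype} gives invariance of $\theta^nH^2$, and $\theta^{2}H^2\subsetneq\theta H^2$ gives non-minimality; your proof of the strict inclusion and your explicit nonconstancy assumption supply details the paper leaves implicit. In the constant case the vague ``$(\phi-\phi(0))$-type factors'' are unnecessary, since the constants already form a nonzero proper $C_\phi$-invariant subspace of $H^2$, so $H^2$ is never minimal.
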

\begin{proof}
	Let $\phi$  be a holomorphic self-map of $\mathbb{D}$ and $\theta H^2 \in Lat(C_\phi)$ for some inner function $\theta$. Then $f=\frac{\theta^n \circ \phi}{\theta^n} =\Big(\frac{\theta \circ \phi}{\theta}\Big)^n \in H^\infty$ for $n \in \mathbb{N}$. Thus, by \cite[Theorem 2.3]{buerlingtype}, $f  \in H^\infty$ and so that $\theta^nH^2\in Lat(C_{\phi})$. We can see that $\theta^{n+1}H^2\subsetneq \theta^{n}H^2$ for $n \in \mathbb{N}$. Hence, 
	$\theta H^2$ is not a minimal invariant subspace of $C_\phi$.
	
\end{proof}

{\bf Data Availability.} The authors declare that this research is purely theoretical
and does not associated with any data.

{\bf Acknowledgments.} The first author is supported by the DST WISE Fellowship for Ph.D. (WISE-PhD)(DST/WISE-PhD/PM/2024/113).
 \nocite{*} 
\bibliographystyle{amsplain}

\end{document}